\renewcommand*{\eqref}[1]{%
  \hyperref[{#1}]{\textup{\tagform@{\ref*{#1}}}}%
}
\newtheorem{theorem}{Theorem}[section]
\newtheorem{proposition}[theorem]{Proposition}
\theoremstyle{definition}
\newtheorem*{definition}{Definition}
\theoremstyle{remark}
\title{Dictionary Learning under Symmetries via Group Representations} 
\author{Subhroshekhar Ghosh$^1$, Aaron Y. R. Low$^1$, Yong Sheng Soh$^{1}$, \\
 Zhuohang Feng$^2$, and Brendan K. Y. Tan$^1$ \\ \\
$^1$Department of Mathematics, National University of Singapore \\ 21 Lower Kent Ridge Road, Singapore 119077 \\ \\
$^2$School of Mathematical Sciences, University of Chinese Academy of Sciences \\ 19A Yuquan Road, Beijing 100049, P. R. China}
\begin{document}

\maketitle

\begin{abstract}
The dictionary learning problem can be viewed as a data-driven process to learn a suitable transformation so that data is sparsely represented directly from example data. In this paper, we examine the problem of learning a dictionary that is invariant under a pre-specified group of transformations. Natural settings include Cryo-EM, multi-object tracking, synchronization, pose estimation, etc. We specifically study this problem under the lens of mathematical representation theory. Leveraging the power of non-abelian Fourier analysis for functions over compact groups, we prescribe an algorithmic recipe for learning dictionaries that obey such invariances. We relate the dictionary learning problem in the physical domain, which is naturally modelled as being infinite dimensional, with the associated computational problem, which is necessarily finite dimensional. We establish that the dictionary learning problem can be effectively understood as an optimization instance over certain matrix orbitopes having a particular block-diagonal structure governed by the irreducible representations of the group of symmetries. This perspective enables us to introduce a band-limiting procedure which obtains dimensionality reduction in applications. We provide guarantees for our computational ansatz to provide a desirable dictionary learning outcome.  We apply our paradigm to investigate the dictionary learning problem for the groups SO(2) and SO(3). While the SO(2)-orbitope admits an exact spectrahedral description, substantially less is understood about the SO(3)-orbitope. We describe a tractable spectrahedral outer approximation of the SO(3)-orbitope, and contribute an alternating minimization paradigm to perform optimization in this setting. We provide numerical experiments to highlight the efficacy of our approach in learning SO(3)-invariant dictionaries, both on synthetic and on real world data.

\vspace{0.025in}

\noindent \emph{Keywords}: Group invariance, equivariance, convolutional dictionary learning, Peter-Weyl theorem, non-abelian harmonic analysis, non-commutative Fourier cofficients, band-limited functions, atomic norms, Carath\'{e}odory orbitope, semidefinite programming, tensor nuclear norm, alternating projections.
\end{abstract}

\section{Introduction} \label{sec:intro}


The ability to model data as sparse vectors under suitable transformations is an essential component of numerous approaches for performing data processing tasks in a range of applications \cite{CDS:98,DonHuo:01}.  Prominent examples of such tasks include signal denoising, imputing missing entries in data (such as in low rank matrix or tensor completion), and solving ill-posed inverse problems \cite{CRT:06,CT:06,Don:95,Don:06}.  At its algorithmic core, these techniques rely on recovering a sparse solution given partial measurements.


\subsection{Dictionary Learning}  Because of the important role that sparsity plays in these approaches, the success of these methods effectively rely on having access to a suitable transformation so that the data of interest is well-modelled as sparse vectors.  The traditional perspective to identifying such transformations is via deep domain expertise -- for instance, the use of the discrete cosine transform and wavelets in image processing follows a long lineage of prior works.  In recent decades, we have witnessed the emergence of an alternative approach in which we \emph{learn} the appropriate transformation \emph{directly} from example data \cite{OlsFie:96,OlsFie:97}.  In the process of doing so, we are now able to readily apply previously developed techniques for data processing applications using these learned  transformations -- this is especially useful in application domains where a suitable transformation is not \emph{a priori} available \cite{Ela:10,OlsFie:96,OlsFie:97}.

The process of learning a suitable transform is referred to as the \emph{dictionary learning} problem in the literature.  Mathematically, given a collection of vectors $\{ y^{(i)} \}_{i=1}^{n} \subset \mathbb{R}^{d}$, we seek a small collection of vectors $\{ \phi_j \}_{j=1}^{q} \subset \mathbb{R}^{d}$ so that every data point is well-approximated by a linear sum of a \emph{few} of these $\phi$'s
\begin{equation} \label{eq:dictlearning_regular}
y^{(i)} \approx \sum_{j=1}^{q} x_j^{(i)} \phi_j, \qquad \text{ where }\qquad x_j^{(i)} \in \mathbb{R}, ~ x^{(i)} = (x_1^{(i)},\ldots,x_q^{(i)} )^{T} \text{ is sparse}.
\end{equation}
The learned $\phi$'s are often referred to as \emph{dictionary atoms}.  A more compact way to express \eqref{eq:dictlearning_regular} is as follows
\begin{equation*} 
y^{(i)} \approx \Phi x^{(i)} \quad \text{ where } \quad x^{(i)} \text{ is sparse},  \text{ and } \quad \Phi = [\phi_1 | \ldots | \phi_q ].
\end{equation*}

The computational task of obtaining the dictionary atoms $\{ \phi_j \}$ is typically formulated as a joint optimization instance over the atoms and the coefficients $x^{(i)}$.  One such formulation is to incorporate the L1-norm to encourage sparsity in the solution $x^{(i)}$
\begin{equation} \label{eq:dl}
\underset{\Phi, x^{(i)} }{\arg \min} ~~ \frac{1}{n} \sum_{i=1}^{n} \left( \| y^{(i)} - \Phi x^{(i)} \|_2^2 + \lambda \| x^{(i)} \|_1 \right) \quad \text{ s.t. } \quad \| \phi_j \|_2^2 = 1.
\end{equation}
Here, $\lambda \geq 0$ is a regularization parameter.  While the optimization instance is \emph{not} convex, the sub-problem corresponding to fixing one set of variables (either the $\phi$'s or the $x$'s) and optimizing over the other is a \emph{convex} program \footnote{To be precise, the sub-problem we refer to when updating $\phi$ is one where we treat $x$ in \eqref{eq:dl} as fixed and we update $\phi$ disregarding the $\ell_2$-norm constraint.  In this case, the update leads to a least squares problem.  The atoms are subsequently scaled to be unit-norm.  This is the usual procedure applied in practice.}.


\subsection{Convolutional Dictionary Learning}  In certain application domains where dictionary learning techniques are applied, it is sometimes more helpful to permit suitably \emph{shifted} or \emph{transformed} copies of a dictionary element as alternative dictionary elements
\begin{equation*}
(a_1,a_2,\ldots, a_d) \quad \mapsto \quad (a_2,a_3,\ldots, a_d, a_1).
\end{equation*}
The \emph{Convolutional Dictionary Learning} (\emph{abbrv.} CDL) problem concerns learning dictionaries with the property that all cyclic shifts (in the coordinates) of a dictionary atom also appear as dictionary atoms.  Invariance of the collection of dictionary atoms under shift transformations is a natural modeling assumption in audio processing (in which we perform shifts in the temporal direction) and in image processing (in which shifts occur in the spatial domain) -- a substantial body of work devoted to developing computational techniques \cite{GaCWoh:18,JVLG:06,Wohlberg:15,ZSE:19} and applying these to audio processing tasks \cite{BluDav:06,GRKN:07,LewSej:17,PABD:06}, image processing tasks \cite{LGWY:18,PRSE:17}, and time-series analysis \cite{Rusu:20,SFB:19} have since followed.  

The CDL computational task can be phrased, for instance, in terms of the following minimization instance
\begin{equation} \label{eq:CDL_orig}
\underset{\Phi, x^{(i)} }{\arg \min} ~~ \frac{1}{n} \sum_{i=1}^{n} \left( \| y^{(i)} - \sum_{j=1}^{q} \phi_j \ast x^{(i)} \|_2^2 + \lambda \| x^{(i)} \|_1 \right) \quad \text{ s.t. } \quad \| \phi_j \|_2^2 = 1,
\end{equation}
where $\ast$ denotes convolution, as per usual convention.

If we are to express the problem explicitly in the dictionary atoms, then the formulation would be exactly as in \eqref{eq:dl}, with the modification whereby $\Phi$ has the following structure
\begin{equation} \label{eq:CDL_expanded}
\Phi  = 
\left[ \begin{array}{c|c|c|c|c}
 \Phi_0 & T \, \Phi_0 & T^2 \, \Phi_0 & \ldots & T^{d-1} \, \Phi_0 
\end{array}\right] \quad \text{ where } \quad \Phi_0 = \left[ \begin{array}{c|c|c}
 \phi_1 & \ldots & \phi_q 
\end{array}\right].
\end{equation}
Here, $T^k$ denotes a shift operation by $k$ coordinates.  In this language, $\Phi$ represents the full set of dictionary atoms while $\Phi_0$ represents the set of generators.

\subsection{Group Invariant Dictionary Learning}  This raises the natural question: how do we handle more general symmetries beyond shifts? Applications where there is an action of a natural group of symmetries are many. Prominent examples include natural image data under rotations and translations \cite{Bispectrum:12,Kon07}; data arising in tomography \cite{Mor12,Vas07}; network based data \cite{Gir06,Her18}; synchronization problems in robotics \cite{Ros20}; computer vision \cite{Agr06}; multi-reference alignment (MRA) \cite{BBKNPW:23,bandeira2014multireference, FLSWX:24,fan2020likelihood, ghosh2022sparse,perry2019sample} and cryo electron microscopy (cryo EM) under the action of rigid motions \cite{BBKNPW:23,bendory2020single,FLSWX:24,singer2018mathematics}.

If the underlying group of transformations is finite then the solution is immediate -- we simply append our dictionary with all transformed copies of the atoms as we did in CDL.  However, this approach is impractical for large groups.  Moreover, if the symmetry is \emph{continuous}, then such a strategy would not work, even at a conceptual level. 

To overcome difficulties arising from working with continuous symmetries, Soh \cite{Soh:20} proposed a different alternating minimization framework for learning dictionaries that are invariant to some specified group invariance.  Concretely, this task, which we term  as the \textit{group invariant dictionary learning} (\emph{abbrv.} GIDL) problem, can be described as learning a collection of atoms $\Phi_0 :=\{ \phi_j \}_{j=1}^{q}$ satisfying the following:
\begin{equation} \label{eq:gidl_problem}
y^{(i)} \approx \sum_{1 \leq k \leq K} x_k^{(i)} \,  g_k^{(i)} \cdot \phi_{[k]}, \qquad x_k^{(i)} \in \mathbb{R}, g_k^{(i)} \in G, \phi_{[k]} \in \Phi_0, \qquad 1 \leq i \leq n.
\end{equation}
That is to say, each data-point $y$ is well-approximated as linear sums of terms of dictionary atoms of the form $g_k \cdot \phi_{[k]}$, each of which are obtained by applying a symmetry transformation $g_k \in G$ on the generating atom $\phi_{[k]}$.  Here, we use square brackets in the subscript to emphasize that $\phi_{[k]}$ is a choice of generating atom from $\Phi_0$, and has no relation to $\phi_k$.  In particular, the model allows for a generating atom $\phi_j$ to participate in \eqref{eq:gidl_problem} more than once, or possibly never at all.  In addition, we impose the additional constraint that the cardinality of the index set should be small relative to the data dimension; i.e. $K \ll d$, so as to capture model sparsity.

Suppose, for now, that the symmetry $g$ can be expressed as the action of a linear transformation $\rho(g)$ acting on a vector space $V$
\begin{equation*}
g \cdot \phi = \rho (g) \phi \quad \text{ for all } \quad g \in G, \phi \in V.
\end{equation*}
The key contribution in \cite{Soh:20} is to formulate the GIDL problem as the solution of the following optimization instance
\begin{equation} \label{eq:GIDL_opt2}
\underset{L_j^{(i)}, \phi_j}{\arg\min}~~\sum_i \big( \frac{1}{2} \| y^{(i)} - \sum_{j=1}^{q} L^{(i)}_{j} \phi_j \|_2^2 + \lambda \cdot \sum_{j=1}^{q} \| L^{(i)}_{j} \|_G \big) \qquad \text{s.t.} \qquad \| \phi_j \|_2^2 = 1,
\end{equation}
where the optimization variables $L_j^{(i)}$ are \emph{operators} residing in the space $\mathrm{Span}( \{ \rho (g) : g \in G \})$.  Here, $\| \cdot \|_G$ is a convex penalty function -- specifically, it is the Minkowski functional induced by the convex hull of the $\rho$'s \cite{BTR:13,CRPW:12}
\begin{equation*} \label{eq:atomic_norm}
\| L \|_G : = \inf \{ ~ t : L \in t \cdot \mathrm{conv}( \{ \pm \rho(g) : g \in G \} ) ~ \}.
\end{equation*}
In particular, the $\| \cdot \|_G$ has the effect of promoting solutions $L^{(i)}_{j}$ that are sparse linear sums of elements from the set $\{ \rho(g) : g \in G \}$.  

The conceptual difference \eqref{eq:GIDL_opt2} makes from \eqref{eq:CDL_orig} and \eqref{eq:CDL_expanded} is that it lifts the optimization problem from the vector space where the data resides to the vector space where the $\rho$'s representing the transformation reside.  In doing so, the problem of representing a data vector $y \in \mathbb{R}^d$ sparsely with respect to a group invariant dictionary -- even if it is infinite or continuous -- can be resolved by solving a \emph{finite} dimensional convex program; specifically by minimizing with respect to the penalty function $\| \cdot \|_G$. 

It is perhaps useful to clarify some conceptual aspects of our framework: the manner in which we represent data in \eqref{eq:GIDL_opt2} -- namely $y \approx \sum_{j=1}^{q} L_{j} \phi_j$ -- in principle provides us with a very large number of parameters to express the data-point $y$.  It is, however, through the presence of the penalty function $\| \cdot \|_G$ applied onto the $L_{j}$'s that we obtain sparsity within the $L_{j}$'s, and from which we learn succinct representations of the data $y$.

\subsection{Our contributions}

The framework laid out by Soh in \cite{Soh:20} fundamentally relies on two basic ingredients being satisfied for its success
\begin{enumerate}
\item We have an explicit description of $\rho$; and \label{requirement 1}
\item We are able to express the convex hull of $\{ \rho(g) : g \in G \}$ tractably.
\end{enumerate}
In what follows, we lay out our four main contributions in this paper in the above context.

Our {\bf first contribution} in this work is to address requirement \ref{requirement 1}.  Our work focuses on settings where the invariances can be described via the action of a \emph{compact} group $G$ whereby we provide a systematic recipe for expressing the transformation by a certain explicit action of linear operators.  We present key features of this programme below; for details we refer the reader to Sections \ref{sec_preliminaries} and \ref{sec_framework}.  The first major step is to represent the data as \emph{functions} over the group $G$ -- this is an essential modeling step, without which it is not apparent how one systematically produces descriptions of $\rho$ that is amenable to computation.  Second, a fundamental result in the representation theory of compact groups, namely the Peter-Weyl Theorem, entails that every such function admits a generalized Fourier series -- unlike the regular Fourier series over periodic functions we are familiar with, the Fourier coefficients may be \emph{matrix}-valued (as we shall see later, this happens if the group is non-commutative).  Third, the Peter-Weyl theory also entails that the group action (namely, the \emph{regular representation}) can be expressed as an operator with the same block diagonal structure as the Fourier series.  That is to say, the group action decomposes to a sequence of square blocks, and each block only acts on a single matrix-valued Fourier coefficient, via an explicit matrix multiplication operation.


An important objective of this paper is to spell out the consequences of group representation theory in our dictionary learning problem in a language that is readily accessible to practitioners.  On the computational front, there are still a number of issues to resolve.  For continuous groups $G$, the functions and operators both reside in infinite dimensional spaces -- what then is the appropriate finite dimensional approximation of the dictionary learning problem?  Our {\bf second contribution} is to address these problems.

In this vein, we explore in greater depth the harmonic analytical aspects of our dictionary learning problem.  In particular, by drawing the connections of representation theory to our group invariant dictionary learning problem, we are now in a position to pose the following question:
\begin{quote}
The dictionary learning problem we solve is a computational problem that resides over a finite dimensional space.  In reality, the problem we solve is simply a model of a similar task that resides in the physical domain, which is continuous.  Is there a principled way of describing what the dictionary learning problem is in the continuous limit?  In particular, are we able to also quantify the error between the dictionary elements we learned by solving the finite dimensional approximation from the analogous problem in the continuous limit?
\end{quote}

Our {\bf third contribution} focuses on the harmonic analytical aspects of our dictionary learning problem.  Under suitable qualifications, we show that the dictionary elements learned in the finite dimensional approximation converges to the solution of the actual problem in the limit, as the dimensionality increases.

Our {\bf fourth contribution} is to spell out in detail the above programme for concrete examples of certain groups that are of key importance in applications.  On this note, we first discuss the process for SO(2), and how this problem can in fact be viewed as the continuous limit of the CDL problem.  Second and third, we discuss the agenda for O(2) and SO(3) respectively.  The discussion for SO(3) is substantially more difficult as it is unknown whether the convex hull of the dictionary elements admits a tractable description.  The apparent connection to the tensor nuclear norm ball suggests that this set is probably not tractable to describe.  Nevertheless, one can easily write down a simple convex relaxation, and our numerics suggest that the relaxation is actually quite tight.  

\if0
\subsection{Related work}

\textbf{Group Equivariant Neural Networks.}  

\textbf{Multi Reference Alignment Problems.}

\fi


\section{Preliminaries} \label{sec_preliminaries}

In this section, we introduce the necessary concepts from group representation theory, harmonic analysis and convex geometry.  In particular, we lay out the groundwork for analysing a non-abelian Fourier series expansion for functions over compact groups in the context of the GIDL problem.  It is on these non-abelian, matrix-valued Fourier coefficients that our principal dictionary learning algorithms operate.

\subsection{Preliminaries on Group Theory}
  
Let $G$ be a multiplicative group whose identity element is denoted as $e$; that is, $a a^{-1} = a^{-1} a = e$ $\forall a \in G$.  A \emph{topological} group is a group that is also a topological space such that multiplication $(a, b) \mapsto ab$ as well as the inverse $a \mapsto a^{-1}$ are both continuous.  A \emph{compact} topological group is a topological group whose topology is compact.  (Throughout this paper, we assume that the topology is Hausdorff.)

In what follows, we use $L^2(G)$ to denote the Hilbert space of squared integrable functions $f:G \rightarrow \mathbb{C}$; that is,
\begin{equation*}
f \in L^2(G) \qquad \Leftrightarrow \qquad \| f \|^2_2 := \| f \|_{L^2(G)}^2 = \int_{G} |f(g)|^2 d \mu (g) <  \infty.
\end{equation*}
Here, $\mu$ always denotes the normalized uniform (Haar) measure over the group.  In this paper, we focus exclusively on compact groups, and hence the uniform measure exists.  

In addition, we are particularly interested in operators on $L^2(G)$.  Concretely, given a Hilbert space $H$ (very often, this will be $L^2(G)$), we let $B(H)$ denote the normed vector space of bounded linear operators $H \rightarrow H$.  Similarly, we let $U(H)$ denote the subset of unitary operators.

\textbf{Group Representations.}  Given two groups $G$ and $H$, a homomorphism from $G$ to $H$ is a map $\rho : G \rightarrow H$ such that 
\begin{equation*}
\rho(ab) = \rho(a) \rho(b) \qquad \text{ for all } \qquad a,b \in G.
\end{equation*}

Let $G$ be a group and let $V$ be a vector space over $\mathbb{C}$.  A \emph{representation} of $G$ on $V$ is a (continuous) homomorphism from $G$ to $\mathrm{GL}(V)$, the group of invertible matrices on $V$.  The \emph{dimension} of a representation $\rho: G \rightarrow \mathrm{GL}(V)$ is the dimension of the vector space $V$.  In what follows, we sometimes use $V$ to refer to the representation.

We say that two representations $\rho_1: G \rightarrow \mathrm{GL}(V_1)$ and $\rho_2: G \rightarrow \mathrm{GL}(V_2)$ are \emph{isomorphic} if there exists an isomorphism $\pi : V_1 \rightarrow V_2$ such that 
\begin{equation*}
\pi \rho_1 (g) \pi^{-1} = \rho_2 (g) \qquad \text{ for all } \qquad g \in G.
\end{equation*}

Let $\rho$ be a representation of $G$ on $V$.  A subspace $W \subseteq V$ that is invariant under the group action $G$ -- that is, it satisfies $\rho(g) w \in W$ for all $g \in G$ and all $w \in W$ -- is called a \emph{subrepresentation}.  We say that a representation $\rho$ is \emph{irreducible} if the only subrepresentations of $V$ are the zero vector space and $V$.

\textbf{Non-commutative Fourier Expansion for Compact Groups.} In the remainder of this paper, we assume that $G$ is a compact (topological) group.  We proceed to describe the main result concerning the existence of a Fourier series for functions over $G$.

First, all the irreducible representations of a compact group are \emph{finite} dimensional.  In fact, if the group is abelian, then the irreducible representations are all \emph{one}-dimensional.

Second, a \emph{unitary representation} is a (continuous) homomorphism $\rho : G \rightarrow U(H)$ for some Hilbert space $H$, where $U(H)$ denotes the group of unitary operators acting on $H$.  In other words, a unitary representation sends every element $g \in G$ to a unitary operator.  It is possible to show that, given any irreducible representation of a compact $G$, one can apply an appropriate change of basis so that the irreducible representation is unitary.  This choice is particularly convenient and is standard.  From henceforth, we assume that irreducible representations are always chosen to be unitary.  Finally, the most important example of a representation in our set-up is the \emph{left regular representation} $\tau : G \rightarrow U(L^2(G))$ acting on $L^2(G)$ as follows
\begin{equation} \label{eq:leftregularaction}
[\tau(g) f](x) := f(g^{-1} x).
\end{equation}

Third, given a compact $G$, one can enumerate all the irreducible finite dimensional unitary representations $\rho_{\xi} : G \rightarrow U(V_\xi)$ of the group $G$, up to isomorphism. (We will assume in this paper that $G$ is second countable for the previous statement to hold.) We let $\hat{G}$ denote such an enumeration.  For convenience, we assume that $\hat{G}$ is always enumerated according to the dimension of the irreducible representation.  

Fourth, the Peter-Weyl Theorem tells us that the regular representation $\tau : G \rightarrow U(L^2(G))$ is isomorphic (via unitaries) to  a direct sum of all irreducible representations given as follows
\begin{equation} \label{eq:tau_decomposition}
\tau \cong \underset{\xi \in \hat{G}}{\oplus} \rho_\xi^{\oplus \mathrm{dim}(V_\xi)}.
\end{equation}

Consider a function $f \in L^2(G)$.  We define the Fourier coefficient of $f$ at $\xi \in \hat{G}$ as follows
\begin{equation}
\hat{f}_\xi = \hat{f}(\xi) := \int_{G} f(g) \rho_\xi (g)^* ~d\mu (g).
\end{equation}
Here, note that $\rho_\xi (g)$ is a square complex matrix of dimensions $\mathrm{dim}(V_\xi) \times \mathrm{dim}(V_\xi)$.  For $\xi \in \hat{G}$, we define the usual trace inner product over the space of linear transformations from $V_\xi$ to $V_\xi$
\begin{equation}
\langle A, B \rangle_\xi := \langle A, B \rangle_{HS(V_\xi)} = \mathrm{tr}_{V_\xi} (AB^*).
\end{equation}
We then have the following Fourier series expansion
\begin{equation} \label{eq:noncomm_fourier}
f(x) ~=~ \sum_{\xi \in \hat{G}} \mathrm{dim}(V_\xi) ~ \left \langle \hat{f}(\xi), \rho_\xi(x)^* \right\rangle_\xi ~=~ \sum_{\xi \in \hat{G}} \mathrm{dim}(V_\xi) ~ \mathrm{tr}_{V_\xi}( \hat{f}(\xi) \rho_\xi(x) ) .
\end{equation}

Let us examine what the left regular representation \eqref{eq:leftregularaction} does on the Fourier coefficients.  Using \eqref{eq:noncomm_fourier}, we have
\begin{equation}
[\tau(g) f](x) ~ = ~ \sum_{\xi \in \hat{G}} \mathrm{dim}(V_\xi) ~ \mathrm{tr}( \hat{f}(\xi) \rho_\xi(g)^* \rho_\xi(x) ) .
\end{equation}
In other words, when we express a function $f\in L^2(G)$ in the Fourier series, the left regular representation $\tau(g)$ acts on the Fourier coefficients by right multiplication of matrices:
\begin{equation*}
\tau(g) : \hat{f}(\xi)  \mapsto \hat{f}(\xi) \rho_\xi(g)^*.
\end{equation*}

Actually, this relation extends a bit more.  Consider the following bi-action
\begin{equation*}
[\tau(g,h) f] (x) ~:=~ f(g^{-1} x h).
\end{equation*}
Then the bi-action $\tau(g,h)$ acts on the Fourier coefficients by conjugation 
\begin{equation} \label{eq:biaction}
\tau(g,h) : \hat{f}(\xi)  \mapsto \rho_\xi (h) \hat{f}(\xi) \rho_\xi(g)^*.
\end{equation}

In addition, we also see that the left regular representation respects the block structure in that an action on a Fourier coefficient does not affect the other Fourier coefficients.  In what follows, we say that an operator $t \in B(L^2(G))$ has a block diagonal structure (according to $\hat{G}$) if
\begin{equation*}
t ( L^2(G)_{\xi}) \subseteq L^2(G)_{\xi}.
\end{equation*}
Here, $L^2(G)_{\xi}$ denotes the $\xi$-isotypic component
of $L^2(G)$; i.e., it is the sub-representation corresponding to $\rho_{\xi}^{\oplus \mathrm{dim}(V_{\xi})}$ in \eqref{eq:tau_decomposition}.  Finally, the \emph{Plancheral identity} gives us the following 
\begin{equation} \label{eq:plancheral}
\| f \|^2 = \sum_{\xi \in \hat{G}} \mathrm{dim}(V_\xi)^2 \| \hat{f} (\xi) \|^2.
\end{equation}
(Note, the norm $\| \hat{f} (\xi) \| = \| \hat{f} (\xi) \|_F$ is precisely the Frobenius norm induced by the trace inner product.)

\subsection{Preliminaries on Convex Geometry}

Let $\mathcal{C}$ be a centrally symmetric set (that is, $-x \in \mathcal{C}$ if and only if $x \in \mathcal{C}$).  The \emph{atomic norm} with respect to $\mathcal{C}$ is the \emph{Minkowski functional} induced by closed convex hull of $\mathcal{C}$ (denoted $\mathrm{cl}(\mathrm{conv}(\mathcal{C}))$) is defined as follows :
\begin{equation} \label{eq:atomicnorm_defn}
\| x \|_{\mathcal{C}} ~ := ~ \inf \{ ~ t > 0 : x \in t \cdot \mathrm{cl}(\mathrm{conv}(\mathcal{C})) ~ \}.
\end{equation}
In particular, $\| \cdot \|_{\mathcal{C}}$ is a positively homogenous function whose level set is the closed convex hull of $\mathcal{C}$.  By convention, we say that $\| x \|_{\mathcal{C}} = +\infty$ if there does not exist $t > 0$ for which $x \in t \cdot \mathrm{cl}(\mathrm{conv}(\mathcal{C})) $.

In this paper, the atomic norm we primarily use is the one induced by the left regular representations of $G$. Let $\tau$ be the canonical left regular representation of $G$, and let $\lambda(G)$ denote the collection of all (signed) linear operators  (on $L^2(G)$) that are obtained from $\tau$ 
\begin{equation} \label{eq:atomicnorm_unitaries}
\lambda(G) ~ := ~ \{ \pm \tau(g) : g \in G \} ~~ \subseteq ~~ U(L^2(G)).
\end{equation}
Consider the normed vector space of operators $B(L^2(G))$.  We define $\| \cdot \|_{G}:= \| \cdot \|_{\lambda(G)}$ over $B(L^2(G))$ as the atomic norm induced by $\lambda(G)$.  

In the definition of set $\lambda(G)$ in \eqref{eq:atomicnorm_unitaries}, we include negations because our signal model \eqref{eq:gidl_problem} permits negative coefficients.  Correspondingly, if our signal model \eqref{eq:gidl_problem} permits complex coefficients, we would then extend signs in $\tau(g)$ to all phases $S^1$.

Earlier, we made the remark that the regular representations respect the block diagonal structure of the irreducible representations.  The space of such block diagonal operators is a subspace of $B(L^2(G))$, the space of bounded linear operators on $L^2(G)$.  In particular, the atomic norm $\| \cdot \|_{G}$ evaluates to $+ \infty$ if the input operator does not respect the relevant block-diagonal structure.  Therefore, penalizing with the norm $\| \cdot \|_{G}$ ensures that the resulting optimal operator has the desirable block-diagonal structure.

\section{Framework and Algorithm} \label{sec_framework}

\subsection{Problem Statement}

For concreteness, we consider the following problem in this paper.  Let $G$ be a compact group.  Suppose we are provided with a collection of (real-valued) functions over the group $G$
\begin{equation} \label{eq:datamodel_functions}
\{ y^{(i)} \}_{i=1}^{n} \subset L^2(G), \qquad y^{(i)} : G \rightarrow \mathbb{R}.
\end{equation}

The collection of functions $\{ y^{(i)} \}_{i=1}^{n}$ models our dataset.  Our goal is to compute a collection of 
functions $\Phi_0 := \{ \phi_j \}_{j=1}^{q} \subset L^2(G)$, $\phi_j : G \rightarrow \mathbb{R}$, so that every function $y^{(i)}$ is well approximated as linear sums of a few $G$-shifted functions from $\Phi_0$.
Following the introduction of the left regular representation in \eqref{eq:leftregularaction}, the problem \eqref{eq:gidl_problem} may be equivalently expressed as
\begin{equation} \label{eq:gidl_problem2}
y^{(i)} \approx \sum_{1 \leq k \leq K} c_{k}^{(i)} \, \tau( g_{k}^{(i)} ) \phi_{[k]} , \qquad  c_{k}^{(i)} \in \mathbb{R}, g_{k}^{(i)} \in G, \phi_{[k]} \in \Phi_0, \qquad 1 \leq i \leq n.
\end{equation}
By few in \eqref{eq:gidl_problem2} as well as in \eqref{eq:gidl_problem}, the number of atoms $K$ required to approximate each data-point $y^{(i)}$  should be small respect to the problem dimensions $d$ and $n$.  In a similar fashion to \eqref{eq:gidl_problem}, $\phi_{[k]}$ denotes a choice of generating atom from the set $\Phi_0$, and is not be confused with $\phi_{k}$.  In particular, the formulation \eqref{eq:gidl_problem2} allows for a single generating atom $\phi_j$ to appear multiple times and acted on by different $G$-shifts, or never at all, in representing a single data-point $y$.

\subsection{Lifting of Data from Homogenous Spaces to Groups}  \label{sec:lifts}
  
On the surface, our choice to model data as \emph{functions} over a group as in \eqref{eq:datamodel_functions} might seem restrictive, especially if you are a data analyst.  In many applications, it is more typical to view data as residing in some space on which the group transformation $G$ \emph{acts}.  As it turns out, this is not a major obstacle as it is possible to `lift' data as functions over $G$ under very general conditions.  

Concretely, let $\mathcal{X}$ be the space on which the data resides.  We consider a data vector $y \in \mathbb{R}^{\mathcal{X}}$ (in the case where $\mathcal{X}$ is discrete), or more generally, a function $y : \mathcal{X} \rightarrow \mathbb{R}$. In the discrete case, we can also view $y$ as a function $\tilde{y} : \mathcal{X} \mapsto \mathbb{R}$ where
\begin{equation*}
\tilde{y}(x) = y_x, \qquad x \in \mathcal{X}.
\end{equation*}
As an example, if the data are planar images, then one can view an image as a function where the input is a planar coordinate and the output is the image intensity at that coordinate.  

Next, pick an arbitrary $x_0 \in \mathcal{X}$ to be our `origin'.  We define the function $f_y : G \rightarrow \mathbb{R}$ by
\begin{equation} \label{eq:homogenousspace_lift}
f_y (g) = \tilde{y} (g x_0) \quad \text{ for all }\quad g \in G.
\end{equation}
Suppose that the group $G$ acts \emph{transitively} over $\mathcal{X}$; that is, for every $x_1,x_2 \in \mathcal{X}$, there is a $g \in G$ so that $g \cdot x_1 = x_2$ (such a $\mathcal{X}$ is known as a \textit{homogeneous space}).  
Then there is a one-to-one correspondence between $f_z$ and $z$.  
The choice of the origin $x_0 \in \mathcal{X}$ is not essential -- the resulting lifted functions $\{ f_{y^{(i)}} \}$ are equivalent up to a common $G$-shift.

\subsection{Dictionary Learning}

To learn the $\Phi$, we solve the following optimization instance
\begin{equation} \label{eq:GIDL_opt}
\underset{\ell_j^{(i)},\phi_j}{\arg\min}~~\sum_i \big( \frac{1}{2} \| y^{(i)} - \sum_{j=1}^{q} \ell^{(i)}_{j} \phi_j \|_2^2 + \lambda \cdot \sum_{j=1}^{q} \| \ell^{(i)}_{j} \|_G \big) \qquad \text{s.t.} \qquad \| \phi_j \|_2^2 = 1.
\end{equation}
Here, $\phi_j \in L^2(G)$ are functions while $\ell_j^{(i)} \in B (L^2(G))$ are operators.  The penalty function $\| \cdot \|_G$ has the effect of promoting solutions $\ell^{(i)}_{j}$ that are sparse linear sums of elements from the set $\{ \pm \tau(g) : g \in G \}$. Further, the fact that our penalty term is structured as an $\ell_1$ linear combination of the$ \| \cdot \|_G$ norms also ensures sparsity in the set of $\ell^{(i)}_{j}$-s.   We constrain the dictionary elements $\| \phi_j \|_2^2 = 1$, as the function $\| \cdot \|_G$ would otherwise penalize the dictionary elements $\phi_j$ to be zero.

The variables $\ell$ (we omit the dependency on $i$ and $j$) in our optimization instance \eqref{eq:GIDL_opt} are operators, and reside in a much higher dimensional space compared to the functions $\phi_j$.  However, the Peter-Weyl theory tells us that all $\tau(g)$-s have a particular block diagonal structure indexed by $\hat{G}$.  Since the set of block diagonal operators (with the same block structure) is a subspace, it follows that the linear combinations of these representations as they appear in \eqref{eq:gidl_problem2} are also block diagonal.  In other words, we may assume without loss of generality that the operators $ \ell$ in \eqref{eq:GIDL_opt} also possess the same block diagonal structure.  That is to say, the operator $\ell$ is completely specified by a sequence of finite dimensional matrices indexed by $\hat{G}$
\begin{equation*}
\{ \ell_\xi \}_{\xi \in \hat{G}}, \qquad \ell_\xi \in \mathbb{C}^{\dim (V_\xi) \times \dim (V_\xi)}, 
\end{equation*}
and its action on a function $\phi \in L^2(G)$ is described by the action on the individual Fourier coefficients by right multiplication of matrices :
\begin{equation*}
\ell_\xi : \hat{\phi}_{\xi} \mapsto \hat{\phi}_{\xi} \ell_{\xi}^{*}. 
\end{equation*}

We spell out what this observation means in the context of the objective \eqref{eq:GIDL_opt}.  By applying the Plancherel identity \eqref{eq:plancheral}, the objective in \eqref{eq:GIDL_opt} (for each fixed $i$) can be re-written as follows
\begin{equation*}
\frac{1}{2} \| y^{(i)} - \sum_{j=1}^{q} \ell_{j}^{(i)} \phi_j \|_2^2 + \lambda \cdot \sum_{j=1}^{q} \| \ell_{j}^{(i)} \|_G ~=~ \sum_{\xi \in \hat{G}} \frac{\mathrm{dim}(V_\xi)^2}{2} \| \hat{y}_\xi
^{(i)} - \sum_{j=1}^{q} \hat{\phi}_{j,\xi} \ell_{j,\xi}^{(i)*} \|_F^2 + \lambda \cdot \sum_{j=1}^{q} \| \ell_{j}^{(i)} \|_{G}.
\end{equation*}
We apply this so that \eqref{eq:GIDL_opt} can be re-written as
\begin{equation} \label{eq:GIDL_opt_FT}
\begin{aligned}
  \underset{\ell_{j,\xi}^{(i)} ,\hat{\phi}_{j,\xi}}{\arg\min}\quad\quad &\sum_i \left( \sum_{\xi \in \hat{G}}  \frac{\mathrm{dim}(V_\xi)^2}{2} \| \hat{y}_{\xi}^{(i)} - \sum_{j=1}^{q} \hat{\phi}_{j,\xi} \ell^{(i)*}_{j,\xi} \|_F^2\right) + \lambda \cdot \sum_{j=1}^{q} \| \ell^{(i)}_{j} \|_{G}  \\
~~\text{s.t.} \quad\quad&  \sum_{\xi \in \hat{G}} \mathrm{dim}(V_\xi)^2 \| \hat{\phi}_{j,\xi} \|_F^2 = 1.
\end{aligned}
\end{equation}

\subsection{Band-limited Dictionary Learning}  \label{sec:bandlimited} 
In realistic scenarios, it is not practicable to use the entire sequence of irreducible representations $\hat{G}$ if the group $G$ is infinite.  Instead, a practical alternative is to restrict to a suitable finite subset of irreducibles $\hat{G}_{\mathrm{fin}}$.  We discuss the necessary modifications to our framework when working with $\hat{G}_{\mathrm{fin}}$.  In what follows, we let $L^2(\hat{G}_{\mathrm{fin}})$ denote the subspace of functions spanned by terms in $\hat{G}_{\mathrm{fin}}$ (specifically, replace $\hat{G}$ with $\hat{G}_{\mathrm{fin}}$ in \eqref{eq:noncomm_fourier}). In this context, it is highly pertinent to consider the analogue of band-limited functions in classical, real variable signal processing.

\textbf{Band-limited atomic norms.}  Recall that the atomic norm $\| \cdot \|_{G}$ is originally defined over the normed vector space $B(L^2(G))$.  When working with band-limited functions, the operators $\ell$ in our framework now naturally reside in $B(L^2(\hat{G}_{\mathrm{fin}}))$, which is finite dimensional.  We define $\| \cdot \|_{\hat{G}_{\mathrm{fin}}}$ to be the atomic norm induced by the block diagonal matrices
\begin{equation} \label{eq:bandlimited_atomicset}
\left\{ \pm 
\left( \begin{array}{ccc}
\ddots & & \\ & \rho_{\xi}(g)^* & \\ & & \ddots 
\end{array} \right)_{\xi \in \hat{G}_{\mathrm{fin}} }
 : g \in G \right\}. 
\end{equation}

On the surface, the atomic norm $\| \cdot \|_{\hat{G}_{\mathrm{fin}}}$ appears to coincide with the atomic norm $\| \cdot \|_{\hat{G}}$, applied only on the irreducibles in $\hat{G}_{\mathrm{fin}}$.  In fact, this intuition can be made formal.  More concretely:
\begin{proposition} \label{thm:bothdefinitionsequal}
Let $\hat{H} \subset \hat{G}$ be any subset of irreducibles (not necessarily finite).  Let $\| \cdot \|_{\hat{H}}$ be the atomic norm induced by \eqref{eq:bandlimited_atomicset}, with $\hat{H}$ in place of $\hat{G}_{\mathrm{fin}}$.  Then
\begin{equation*} 
\| t \|_{\hat{H}} = \inf \{ \| z \|_{\hat{G}} : P_{B(L^2(\hat{H}))}(z) = t , z \in B(L^2(\hat{G})) \}.
\end{equation*}
Here, $P$ denotes unitary projection onto $B(L^2(\hat{H}))$.
\end{proposition}

In particular, the utility of Proposition \ref{thm:bothdefinitionsequal} is to relate $\| \cdot \|_{\hat{G}_{\mathrm{fin}}}$, the atomic norm we deploy in practice, with $\| \cdot \|_{\hat{G}}$, which is the atomic norm our conceptual dictionary learning problem is based on.  We prove Proposition \ref{thm:bothdefinitionsequal} in Appendix \ref{sec:proof_alternatenorm_charac}.

\textbf{Modified dictionary learning procedure.}  The band-limited version of our proposed dictionary learning problem is as follows: in the formulation \eqref{eq:GIDL_opt_FT}, (i) we replace sums over $\xi \in \hat{G}$ with sums over $\xi \in \hat{G}_{\mathrm{fin}}$, and (ii) we replace $\| \cdot \|_G$ with $\| \cdot \|_{\hat{G}_{\mathrm{fin}}}$.  In particular, the $\hat{\phi}$'s are only supported on $\hat{G}_{\mathrm{fin}}$, while the $\ell$'s reside in $B(L^2(\hat{G}_{\mathrm{fin}}))$.

\textbf{Choosing $\hat{G}_{\mathrm{fin}}$.} The practitioner has some degree of freedom in selecting $\hat{G}_{\mathrm{fin}}$ -- in many settings, a sensible approach is to use some notion of `frequency' or `complexity' to rank the irreducibles.  For instance, one might list the irreducibles according to dimension, and only retain those with dimension up to some bandwidth.  In some cases, using dimension as the sole criteria may be insufficient -- for instance, all the irreducibles of SO(2) are one-dimensional (see Section \ref{sec:so2}).  In such settings, a more refined criteria is necessary; in the example of SO(2), ranking the irreducibles of SO(2) by its frequency would be a natural choice. On a more general Lie group, a suitable norm (such as the $H^1$ norm) of the character functions would be appropriate.

In what follows, our discussion applies to any bandwidth-limited $\hat{G}_{\mathrm{fin}}$.

\subsection{Algorithm} \label{sec:algo}

Our algorithm for solving \eqref{eq:GIDL_opt_FT} is to perform an alternating minimization scheme, in which we fix one variable and we update the other.  The algorithm is valid regardless of whether our enumeration is $\hat{G}$ or a truncation $\hat{G}_{\mathrm{fin}}$.  

\noindent\makebox[\linewidth]{\rule{\textwidth}{0.4pt}}
\textbf{Initialize.} Initialize an initial estimate of the functions $\{ \phi_j \}$, that are scaled to be unit-Frobenius norm. \\
\textbf{Step (i):} We fix the Fourier coefficients $\{ \hat{\phi}_{j,\xi} \}$ and we update the operators $\ell^{(i)}_{j,\xi}$.  This leads to the solution of a convex program, which is further separable in the index $i$ (denoting data), resulting in the following
\begin{equation} \label{eq:sparsecode}
 \underset{\{\ell_{j}\}_{j=1}^{q} }{\arg\min}\qquad  \sum_{\xi \in \hat{G}}  \frac{\mathrm{dim}(V_\xi)^2}{2} \| \hat{y}_{\xi} - \sum_{j=1}^{q} \hat{\phi}_{j,\xi} \ell_{j,\xi}^* \|_F^2 + \lambda \cdot \sum_{j=1}^{q} \| \ell_{j} \|_{G}  \qquad 1 \leq i \leq n.
\end{equation}
\textbf{Step (ii):} We fix the operators $\hat{\ell}^{(i)}_{j}$ and we update the Fourier coefficients $\{ \hat{\phi}_{j,\xi} \}$.  This leads to a least squares instance that is further separable in the irreducible representations $\xi$, leading to the following problem
\begin{equation} \label{eq:leastsquares}
\underset{\hat{\phi}_{j,\xi}}{\arg\min}~~\sum_i  \| \hat{y}_{\xi}^{(i)} - \sum_{j=1}^{q} \hat{\phi}_{j,\xi} \ell^{(i)*}_{j,\xi} \|_F^2, \qquad \xi \in \hat{G}.
\end{equation}
By taking the derivative with respect to $\hat{\phi}_{j,\xi}$ we have the first order condition
\begin{equation}
\sum_{i,j'} \ell^{(i) *}_{j,\xi} \ell^{(i)}_{j',\xi} \hat{\phi}^*_{j',\xi} = \sum_i \ell^{(i) *}_{j,\xi} \hat{y}_{\xi}^{(i)*}, \qquad 1 \leq j \leq q.
\end{equation}
Consequently, $\hat{\phi}$ is specified as follows -- here, $+$ denotes the pseudoinverse
\begin{equation} \label{eq:leastsquaresupdatespan}
\left( \begin{array}{c}
\hat{\phi}_{1,\xi}^* \\ \vdots \\ \hat{\phi}_{q,\xi}^*
\end{array} \right) = 
\left( \begin{array}{ccc}
\sum_{i} \ell^{(i)*}_{1,\xi} \ell^{(i)}_{1,\xi} & \ldots & \sum_{i} \ell^{(i)*}_{1,\xi} \ell^{(i)}_{q,\xi} \\
\vdots & & \vdots \\
\sum_{i} \ell^{(i)*}_{q,\xi} \ell^{(i)}_{1,\xi} & \ldots & \sum_{i} \ell^{(i)*}_{q,\xi} \ell^{(i)}_{q,\xi}
\end{array} \right)^{+}
\left( \begin{array}{c}
\sum_i \ell^{(i)*}_{1,\xi} \hat{y}_{\xi}^{(i)*} \\ \vdots \\ \sum_i \ell^{(i)*}_{q,\xi} \hat{y}_{\xi}^{(i)*}
\end{array} \right).
\end{equation}
\textbf{Step (iii):} Scale the updated functions $\{ \phi_j \}_{j=1}^{q}$ to have norm one
\begin{equation}
\hat{\phi}_{j,\xi} \leftarrow \frac{\hat{\phi}_{j,\xi}}{ \sqrt{ \sum_{\xi \in \hat{G}} \mathrm{dim}(V_\xi)^2 \|\hat{\phi}_{j,\xi}\|_F^2 }}.
\end{equation}
\textbf{Output.}  After desired number of iterations, we output the Fourier coefficients $\{ \hat{\phi}_{j,\xi} :1 \leq j \leq q, \xi \in \hat{G} \}$.  These coefficients specify a collection of functions $\{\hat{\phi}_j \}_{j=1}^{q} \subset L^2(\hat{G})$.  As a reminder, each of these $\hat{\phi}_j$ represent a canonical choice within an equivalence class.  In particular, the dictionary we learn is specified by $\{ \pm \tau(g) \hat{\phi}_j : g \in G, 1 \leq j \leq q \}$.   \\
\noindent\makebox[\linewidth]{\rule{\textwidth}{0.4pt}}

We make some quick observations regarding the computational complexity of the above procedure.  First, Step (i) solves for \eqref{eq:sparsecode} across each data-point, and hence the complexity is linear in the number of data-points $n$.  The complexity of {\em each instance} of \eqref{eq:sparsecode} on the problem dimensions is more delicate to characterize.  The ambient dimension of the $\ell$-s is $d_0 := \sum_{\xi \in \hat{G}} \mathrm{dim}(V_\xi)^2$ -- nominally, the dependency should be $O(\mathrm{poly}(d_0))$.  However, the squared loss term decouples across the $\xi$-s; in particular, the terms $\ell_{\xi}$ only interact via the function $\| \cdot \|_G$ applied to $\ell$.  This suggests the possibility of developing algorithms that split along such a factorization.  Second, Step (ii) solves for a least squares system, and has complexity $O(n+(q d_0)^3)$.  The $O(n)$ arises from computing the matrix and the vector sums the RHS of \eqref{eq:leastsquares}, and the $O(q d_0)^3$ arises from solving a linear system in the concatenated $\hat{\phi}$'s whose dimension is $q d_0$.

\subsection{Projecting to homogenous spaces}
The learned functions $\{ \hat{\phi}_j :1 \leq j \leq q \}$ reside in $L^2(G)$.  Our last step is to reverse our initial lifting procedure to obtain our desired dictionaries as functions in $L^2(\mathcal{X})$.

Concretely, for each $\hat{\phi}_j \in L^2(G)$, we define $\tilde{\phi}_j \in L^2(\mathcal{X})$ by
\begin{equation} \label{eq:proj_to_homogenous}
\tilde{\phi}(x) = \hat{\phi} (g) \qquad \text{ where } \qquad x_0 = g^{-1} x.
\end{equation}
Here, $x_0 \in \mathcal{X}$ is our choice of `origin' made earlier in Section \ref{sec:lifts}.

First, we claim that \eqref{eq:proj_to_homogenous} is well-defined; that is, $\hat{\phi} (g) = \hat{\phi} (h)$ whenever $g^{-1} x = h^{-1} x$.  To see why, we point out that a consequence of our construction of $f_y$ in \eqref{eq:homogenousspace_lift} is that it satisfies the property
\begin{equation} \label{eq:cosetstructure}
f_y (g) = f_y (gz) \qquad \text{ for all } \qquad z \in \mathrm{Stab}(x_0).
\end{equation}
It is straightforward to check that the set of functions satisfying \eqref{eq:cosetstructure} forms a vector subspace of $L^2(G)$.  The relationship \eqref{eq:cosetstructure} (via \eqref{eq:biaction}) translates to the following in terms of Fourier coefficients
\begin{equation} \label{eq:cosetstructure_fourier}
   \rho_\xi (z) \hat{f} (\xi) = \hat{f} (\xi) \quad \text{ for all } \quad z \in \mathrm{Stab}(x_0).
\end{equation}
Now, from \eqref{eq:leastsquaresupdatespan}, we observe that the $\hat{\phi}_{\xi}$'s are obtained by right multiplication on the matrices $\hat{y}_{\xi}$ (the conjugate transpose flips the order).  In particular, this means that the updated $\hat{\phi}$'s also satisfy \eqref{eq:cosetstructure_fourier}, which in turn implies well-definedness.

In practical implementations, the presence of numerical errors means that there will be variations in the value of $\hat{\phi} (h)$ across choices of input $h$ satisfying $x_0 = h^{-1} x$.  As such, it is good practice to define $\tilde{\phi}$ as an average across all choices of $h$ satisfying $x_0 = h^{-1} x$.  First, note that $x_0 = h^{-1} x$ if and only if $h^{-1}g \in \mathrm{Stab}(G)$.  In other words, the set of all $h$'s satisfying $x_0 = h^{-1} x$ is a coset of $\mathrm{Stab}(G)$.  Since $G$ is compact, $\mathrm{Stab}(x_0)$ is also compact, and hence one can define $\tilde{\phi}$ as follows
\begin{equation*}
\tilde{\phi}(x) = \int_{h :  x_0 = h^{-1} x } \hat{\phi} (h) ~ d\mu(h) = \int_{z \in \mathrm{Stab}(x_0)} \hat{\phi} (gz) ~ d\mu(z).
\end{equation*}

\subsection{Extensions to non-homogenous spaces}
We make some brief remarks concerning extensions of our framework to settings where $\mathcal{X}$ is {\em not} a homogenous $G$-space; that is, $G$ does not act transitively on $\mathcal{X}$.  In such settings, the appropriate extension of our framework is to view $\mathcal{X}$ as a union of orbits of $G$.  In particular, each data-point and each dictionary generating atom is a {\em collection of functions}, indexed by the $G$-orbits.  The group $G$ then acts on these functions jointly. 

We illustrate this with an example:  Suppose we have a collection of noisy images of a common object with a common center but unknown rotation.  The task is to learn a small set of representative images for this collection.  In this example, the appropriate choice of group to model this task is $SO(2)$; however, $SO(2)$ does {\em not} act on $\mathbb{R}^2$ transitively.  This is not an issue -- we simply view $\mathbb{R}^{2}$ as the union of circles centered at the origin, with $SO(2)$ acting on these orbits jointly.  

The ideas laid out in our framework extend to such settings in an analogous fashion.  We, however, do not discuss these extensions in this paper.

\section{Approximation Theoretic Guarantees} \label{sec:approx}

So far, we have described a framework for learning dictionaries for functions over groups that are invariant to the group action.  While our techniques are applicable to infinite groups, it is the finite dimensional truncations as outlined in Section \ref{sec:bandlimited} that are of practical interest.  In this light, a basic question is: What is the relationship between dictionaries obtained by solving the dictionary learning problem in infinite dimensional space with the dictionaries obtained by solving the same problem restricted to a subset of irreducibles?  This is the main topic of this section.  

To answer the above questions, we study a slightly modified version of the dictionary learning problem in \eqref{eq:GIDL_opt_FT}.  More precisely, consider the following:
\begin{equation} \label{eq:GIDL_opt_FT_lagrangian}
\underset{\ell_{j,\xi}^{(i)} ,\hat{\phi}_{j,\xi}}{\arg\min} \sum_i \left( \sum_{\xi \in \hat{H}}  \frac{\mathrm{dim}(V_\xi)^2}{2} \| \hat{y}_{\xi}^{(i)} - \sum_{j=1}^{q} \hat{\phi}_{j,\xi} \ell^{(i)*}_{j,\xi} \|_F^2 + \lambda \cdot \sum_{j=1}^{q} \| \ell^{(i)}_{j} \|_{\hat{H}} \right) + \sum_{j} \mu_j \cdot \left( \sum_{\xi \in \hat{H}} \mathrm{dim}(V_\xi)^2 \| \hat{\phi}_{j,\xi} \|_F^2 \right).
\end{equation}
Here, the parameters $\mu_j$ are {\em non-negative} penalty parameters and are to be viewed as being {\em fixed} in the above.  As before, $\phi_j \in L^2(\hat{H})$ are functions, $\ell_j^{(i)} : B(L^2(\hat{H}))$ are linear operators.  

The formulation \eqref{eq:GIDL_opt_FT_lagrangian} differs from \eqref{eq:GIDL_opt_FT} in that the constraint on the $\ell_2$-norm on $\phi_j$ now appears in the objective with $\mu_j$ as the corresponding \emph{Lagrange multiplier}.  This modification is made out of analytical considerations:  Generally speaking, there is a correspondence between the constrained formulation in \eqref{eq:GIDL_opt_FT} and the Lagrangian formulation in \eqref{eq:GIDL_opt_FT_lagrangian}.  By that, we mean that there is a choice of parameters $\{\mu_j\}_{j=1}^{q}$ (dependent on the data) such that the solution to \eqref{eq:GIDL_opt_FT_lagrangian} coincides to the solution to \eqref{eq:GIDL_opt_FT}.  

Given a subset of irreducibles $\hat{H} \subseteq \hat{G}$, we let $\mathrm{OPT-}\hat{H}$ denote the optimal value of \eqref{eq:GIDL_opt_FT_lagrangian}.  Our first result describes how the optimal value $\mathrm{OPT-}\hat{H}$ obtained using a partial set of irreducibles $\hat{H}$ in \eqref{eq:GIDL_opt_FT_lagrangian} compares with the optimal value $\mathrm{OPT-}\hat{G}$ obtained using the full set of irreducibles $\hat{G}$.

\begin{theorem} \label{thm:objval_increasing}  
Let $\{ \hat{H}_k \} \subseteq \hat{G}$ be an increasing subsequence (in the inclusion sense) such that $\hat{H}_k \uparrow \hat{G}$.  Then $\mathrm{OPT-}\hat{H}_k \uparrow \mathrm{OPT-}\hat{G}$.
\end{theorem}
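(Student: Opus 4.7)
The statement decomposes into two parts: (i) monotonicity, $\mathrm{OPT-}\hat{H}_k \leq \mathrm{OPT-}\hat{H}_{k+1}$, and (ii) convergence, $\lim_k \mathrm{OPT-}\hat{H}_k = \mathrm{OPT-}\hat{G}$. The approach is a two-sided argument using restriction and lifting, relying crucially on the fact that the penalty $\|\cdot\|^+_{\hat{H}}$ in \eqref{eq:alternative_atomicnorm} is defined as an infimum of the full norm $\|\cdot\|_{\hat{G}}$ over preimages of the truncation projection.

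For monotonicity, I would begin with an $\epsilon$-optimal point $(\ell,\phi)$ for $\mathrm{OPT-}\hat{H}_{k+1}$ and project it to $B(L^2(\hat{H}_k)) \times L^2(\hat{H}_k)$ by retaining only the Fourier blocks indexed by $\hat{H}_k$. All three pieces of the objective are non-increasing under this projection: the reconstruction-error sum simply drops the positive terms for $\xi \in \hat{H}_{k+1} \setminus \hat{H}_k$; the Frobenius-norm penalty on $\hat{\phi}_{j,\xi}$ does likewise; and for the atomic-norm penalty, any lift $z \in B(L^2(\hat{G}))$ witnessing $\|\ell\|^+_{\hat{H}_{k+1}}$ automatically witnesses $\|\ell|_{\hat{H}_k}\|^+_{\hat{H}_k}$, by transitivity of the block-diagonal projections $P_{B(L^2(\hat{H}_k))} = P_{B(L^2(\hat{H}_k))} \circ P_{B(L^2(\hat{H}_{k+1}))}$. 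Thus the restricted point is feasible for $\mathrm{OPT-}\hat{H}_k$ with no larger objective, so $\mathrm{OPT-}\hat{H}_k \leq \mathrm{OPT-}\hat{H}_{k+1}$. Running the same argument with $\hat{H}_{k+1}$ replaced by $\hat{G}$ shows $\mathrm{OPT-}\hat{H}_k \leq \mathrm{OPT-}\hat{G}$ for every $k$, so the monotone limit $L := \lim_k \mathrm{OPT-}\hat{H}_k$ exists and $L \leq \mathrm{OPT-}\hat{G}$.

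The reverse inequality $L \geq \mathrm{OPT-}\hat{G}$ requires a lifting construction. Given an $\epsilon$-optimal pair $(\ell^{(k)},\phi^{(k)})$ for $\mathrm{OPT-}\hat{H}_k$, I would use the definition \eqref{eq:alternative_atomicnorm} to choose operators $z^{(k)}_j \in B(L^2(\hat{G}))$ projecting to $\ell^{(k)}_j$ with $\|z^{(k)}_j\|_{\hat{G}} \leq \|\ell^{(k)}_j\|^+_{\hat{H}_k} + \epsilon$, and extend $\phi^{(k)}$ by zero onto $\hat{G} \setminus \hat{H}_k$. Evaluating this candidate in the $\mathrm{OPT-}\hat{G}$ objective: the blocks $\xi \in \hat{H}_k$ reproduce the $\mathrm{OPT-}\hat{H}_k$ contribution up to an additive $O(\epsilon)$ from the lift; on the tail $\xi \in \hat{G} \setminus \hat{H}_k$ the reconstruction terms collapse to $\tfrac{\dim(V_\xi)^2}{2}\|\hat{y}^{(i)}_\xi\|_F^2$ (because $\hat{\phi}^{(k)}_{j,\xi}=0$ there regardless of the values $z^{(k)}_{j,\xi}$ take on that tail), and the $\phi$-penalty on the tail vanishes. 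Hence
\begin{equation*}
\mathrm{OPT-}\hat{G} ~\leq~ \mathrm{OPT-}\hat{H}_k ~+~ \sum_i \sum_{\xi \in \hat{G} \setminus \hat{H}_k} \tfrac{\dim(V_\xi)^2}{2}\,\|\hat{y}^{(i)}_\xi\|_F^2 ~+~ O(\epsilon).
\end{equation*}
Since each $y^{(i)} \in L^2(G)$, the Plancherel identity \eqref{eq:plancheral} forces the Fourier tail to vanish as $\hat{H}_k \uparrow \hat{G}$; letting $k \to \infty$ and then $\epsilon \to 0$ yields $\mathrm{OPT-}\hat{G} \leq L$, completing the proof.

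The principal obstacle is ensuring both directions of the argument are compatible with the auxiliary norm \eqref{eq:alternative_atomicnorm}: the restriction needed for monotonicity requires $\|\cdot\|^+_{\hat{H}}$ to decrease under nested truncation, while the lifting needed for the limit requires recovering a global atomic cost with controlled overhead. Both demands are met precisely because $\|\cdot\|^+_{\hat{H}}$ is defined as an infimum over lifts rather than directly as an intrinsic atomic norm on the truncated space — which is the structural reason this modification was introduced in \eqref{eq:alternative_atomicnorm} over the more computational definition used in \eqref{eq:bandlimited_atomicset}. The remaining analytic ingredient is the Plancherel tail bound, which is routine once the algebraic restriction/lifting machinery is in place.
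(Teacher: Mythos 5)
Your proposal is correct and follows essentially the same route as the paper: the lower bound by projecting a (near-)optimal solution onto the smaller set of irreducibles (the paper's Propositions \ref{thm:atomicnorm_increasing} and \ref{thm:dictapprox_obj}), and the upper bound by lifting via a near-attaining witness of the infimum in \eqref{eq:alternative_atomicnorm}, extending $\phi$ by zero, and controlling the excess by the Plancherel tail $\sum_{\xi\in\hat{G}\setminus\hat{H}_k}\tfrac{\dim(V_\xi)^2}{2}\|\hat{y}^{(i)}_\xi\|_F^2\to 0$. Your use of $\epsilon$-optimal points and your explicit verification of monotonicity between consecutive truncations $\hat{H}_k\subseteq\hat{H}_{k+1}$ are minor refinements of the paper's argument, not a different approach.
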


Let $\hat{\Phi}^{\hat{H}} := \{ \hat{\phi}^{\hat{H}}_{1},\ldots,\hat{\phi}^{\hat{H}}_{q} \} \subset L^2(\hat{H})$ be the optimal dictionary to \eqref{eq:GIDL_opt_FT_lagrangian}.  Note that every function in $L^2(\hat{H})$ can be naturally embedded as function in $L^2(\hat{G})$; with a (minor) abuse of notation, we also let $\hat{\Phi}^{\hat{H}}$ denote such an embedding.  A basic question is: How does the solution $\hat{\Phi}^{\hat{H}}$ compare with $\hat{\Phi}^{\hat{G}}$ the optimal dictionary obtained using the full set of irreducibles, as functions in $L^2(\hat{G})$?

The first basis of comparison is in terms of objective value.  Concretely, we consider the objective value upon substituting $\hat{\Phi}^{\hat{H}}$ into \eqref{eq:GIDL_opt_FT_lagrangian} over the full set of irreducibles, reproduced in the following for clarity
\begin{equation} \label{eq:GIDL_opt_FT_lagrangian_full}
\underset{\ell_{j,\xi}^{(i)} ,\hat{\phi}_{j,\xi}}{\arg\min} \sum_i \left( \sum_{\xi \in \hat{G}}  \frac{\mathrm{dim}(V_\xi)^2}{2} \| \hat{y}_{\xi}^{(i)} - \sum_{j=1}^{q} \hat{\phi}_{j,\xi} \ell^{(i)*}_{j,\xi} \|_F^2 + \lambda \cdot \sum_{j=1}^{q} \| \ell^{(i)}_{j} \|_{\hat{G}} \right) + \sum_{j} \mu_j \cdot \left( \sum_{\xi \in \hat{G}} \mathrm{dim}(V_\xi)^2 \| \hat{\phi}_{j,\xi} \|_F^2 \right).
\end{equation}

\begin{theorem} \label{thm:objval_decreasing}  
Let $\{ \hat{H}_k \} \subseteq \hat{G}$ be an increasing subsequence (in the inclusion sense) such that $\hat{H}_k \uparrow \hat{G}$.  Then $\mathrm{OBJ-}\hat{H}_k \downarrow \mathrm{OPT-}\hat{G}$.
\end{theorem}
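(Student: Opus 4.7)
The plan is to derive a clean decomposition of $\mathrm{OBJ-}\hat{H}$ into the restricted optimum plus a Plancherel tail, combine Theorem \ref{thm:objval_increasing} with $L^2$-summability of the Fourier coefficients of the data to obtain convergence, and then close out monotonicity by a single operator-embedding argument. Let $(\hat{\phi}^{\hat{H}}, \ell^{\hat{H}})$ be the optimizer of \eqref{eq:GIDL_opt_FT_lagrangian} on $\hat{H}$ and $t^{\hat{H}}\in B(L^2(\hat{G}))$ its prescribed embedding that attains the infimum in \eqref{eq:alternative_atomicnorm}. Since $\hat{\phi}^{\hat{H}}_{j,\xi}=0$ for $\xi\notin\hat{H}$, the block-diagonal action $\hat{\phi}_{j,\xi}\mapsto\hat{\phi}_{j,\xi} t^{(i)*}_{j,\xi}$ leaves $\hat{y}^{(i)}_\xi$ intact on $\hat{G}\setminus\hat{H}$, while on $\hat{H}$ it reproduces exactly the $\hat{H}$-restricted fit by virtue of $P_{B(L^2(\hat{H}))}(t^{\hat{H}})=\ell^{\hat{H}}$. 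Combining this with the identity $\|t^{\hat{H}}\|^+_{\hat{G}}=\|t^{\hat{H}}\|_{\hat{G}}=\|\ell^{\hat{H}}\|^+_{\hat{H}}$ and the fact that the $\ell_2$-Lagrange penalty on $\hat{\phi}^{\hat{H}}$ is supported on $\hat{H}$, one obtains the key decomposition
\begin{equation*}
\mathrm{OBJ-}\hat{H} \;=\; \mathrm{OPT-}\hat{H} \;+\; \sum_i \sum_{\xi\in\hat{G}\setminus\hat{H}} \tfrac{\mathrm{dim}(V_\xi)^2}{2}\,\|\hat{y}^{(i)}_\xi\|_F^2.
\end{equation*}

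Feasibility of $(\hat{\phi}^{\hat{H}_k}, t^{\hat{H}_k})$ in \eqref{eq:GIDL_opt_FT_lagrangian_full} immediately gives the lower bound $\mathrm{OBJ-}\hat{H}_k \geq \mathrm{OPT-}\hat{G}$ for every $k$. Theorem \ref{thm:objval_increasing} yields $\mathrm{OPT-}\hat{H}_k \uparrow \mathrm{OPT-}\hat{G}$, while Plancherel \eqref{eq:plancheral} applied to $y^{(i)}\in L^2(G)$ gives $\sum_{\xi}\mathrm{dim}(V_\xi)^2\|\hat{y}^{(i)}_\xi\|_F^2 = \|y^{(i)}\|_2^2 < \infty$, so the tail on the right-hand side of the decomposition is a convergent-series tail and hence decreases to zero as $\hat{H}_k \uparrow \hat{G}$. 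Substituting these into the decomposition already yields $\mathrm{OBJ-}\hat{H}_k \to \mathrm{OPT-}\hat{G}$; the remaining task is to upgrade convergence to a monotone decrease.

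For the monotonicity step, given the nesting $\hat{H}_k\subseteq\hat{H}_{k+1}$, I would promote the $\hat{H}_k$-optimizer $(\hat{\phi}^{\hat{H}_k},\ell^{\hat{H}_k})$ to a feasible point for the $\hat{H}_{k+1}$ problem by defining the embedded operator $\tilde{\ell} := P_{B(L^2(\hat{H}_{k+1}))}(t^{\hat{H}_k})$. Transitivity of projections ensures $P_{B(L^2(\hat{H}_k))}(\tilde{\ell})=\ell^{\hat{H}_k}$, and because $t^{\hat{H}_k}$ itself is a valid lift of $\tilde{\ell}$ to $B(L^2(\hat{G}))$, we obtain $\|\tilde{\ell}\|^+_{\hat{H}_{k+1}}\leq\|t^{\hat{H}_k}\|_{\hat{G}}=\|\ell^{\hat{H}_k}\|^+_{\hat{H}_k}$. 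Since $\hat{\phi}^{\hat{H}_k}_{j,\xi}=0$ on $\hat{H}_{k+1}\setminus\hat{H}_k$, the contribution of these new indices to the data-fit is exactly $\sum_i\sum_{\xi\in\hat{H}_{k+1}\setminus\hat{H}_k}\tfrac{\mathrm{dim}(V_\xi)^2}{2}\|\hat{y}^{(i)}_\xi\|_F^2$, and summing all pieces gives $\mathrm{OPT-}\hat{H}_{k+1}\leq \mathrm{OPT-}\hat{H}_k + \sum_i\sum_{\xi\in\hat{H}_{k+1}\setminus\hat{H}_k}\tfrac{\mathrm{dim}(V_\xi)^2}{2}\|\hat{y}^{(i)}_\xi\|_F^2$. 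Adding the tail over $\hat{G}\setminus\hat{H}_{k+1}$ to both sides and invoking the key decomposition turns this precisely into $\mathrm{OBJ-}\hat{H}_{k+1}\leq\mathrm{OBJ-}\hat{H}_k$.

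The main obstacle lies in the operator-embedding bookkeeping that underpins the norm bound $\|\tilde{\ell}\|^+_{\hat{H}_{k+1}}\leq\|\ell^{\hat{H}_k}\|^+_{\hat{H}_k}$. It is here that the extensional definition \eqref{eq:alternative_atomicnorm} of $\|\cdot\|^+_{\hat{H}}$ is indispensable: the block-diagonal atomic norm $\|\cdot\|_{\hat{H}}$ of \eqref{eq:bandlimited_atomicset} need not respect embeddings $\hat{H}_k\hookrightarrow\hat{H}_{k+1}$ with matching norms, which is precisely the analytical reason the authors adopt the $+$ variant, together with the Lagrangian form of the constraint on $\hat{\phi}_j$, for their analysis.
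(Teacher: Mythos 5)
Your proof is correct, and its core is identical to the paper's: both substitute the $\hat{H}$-optimal pair into the full-irreducible objective via the norm-preserving lift from \eqref{eq:alternative_atomicnorm}, observe that the data-fit, atomic-norm, and $\ell_2$-penalty terms respectively split, are preserved, and are unchanged, and arrive at the same key decomposition $\mathrm{OBJ-}\hat{H} = \mathrm{OPT-}\hat{H} + \sum_i \sum_{\xi \in \hat{G}\setminus\hat{H}} \tfrac{\mathrm{dim}(V_\xi)^2}{2}\|\hat{y}^{(i)}_\xi\|_F^2$, after which convergence follows from Proposition \ref{thm:dictapprox_obj}, feasibility (giving $\mathrm{OBJ-}\hat{H}_k \geq \mathrm{OPT-}\hat{G}$), and the Plancherel tail vanishing. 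The one place you go beyond the paper is the final monotonicity step: the paper simply asserts the decrease $\downarrow$ after establishing convergence, whereas the decomposition alone writes $\mathrm{OBJ-}\hat{H}_k$ as an increasing sequence plus a decreasing one, which is not automatically monotone. Your embedding of the $\hat{H}_k$-optimizer into the $\hat{H}_{k+1}$-problem via $\tilde{\ell} = P_{B(L^2(\hat{H}_{k+1}))}(t^{\hat{H}_k})$, yielding $\mathrm{OPT-}\hat{H}_{k+1} \leq \mathrm{OPT-}\hat{H}_k + \sum_i\sum_{\xi\in\hat{H}_{k+1}\setminus\hat{H}_k}\tfrac{\mathrm{dim}(V_\xi)^2}{2}\|\hat{y}^{(i)}_\xi\|_F^2$ and hence $\mathrm{OBJ-}\hat{H}_{k+1}\leq\mathrm{OBJ-}\hat{H}_k$ after adding the common tail, is a genuine and welcome addition that actually justifies the monotone-decrease claim in the theorem statement; the only caveat, shared with the paper's own argument, is that both implicitly assume the infimum in \eqref{eq:alternative_atomicnorm} is attained by some lift $t^{\hat{H}_k}$.
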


Theorems \ref{thm:objval_increasing} and \ref{thm:objval_decreasing} concerns the \emph{objective value}.  In many cases, one may also interested in understanding the convergence behavior of the dictionary atoms.  To make such statements, we require a few more ingredients.  

First, we need to quantify the difference between pairs of dictionaries in a fashion that accounts for group action.  Given two dictionaries $\Phi := \{ {\phi}_1, \ldots, {\phi}_q \}$ and $\tilde{\Phi} := \{ \tilde{\phi}_1, \ldots, \tilde{\phi}_q \}$, we define the distance as the maximum difference between pairs of elements, modulo equivalences over permutations and the group transformations
\begin{equation} \label{eq:dict_distance}
D(\Phi,\tilde{\Phi}) : = \min_{\pi, g_j \in G} \max_j \| \phi_j - g_j \tilde{\phi}_{\pi(j)} \|_2.
\end{equation}
Here, $\pi : \{1,\ldots,q\} \mapsto \{1,\ldots,q\}$ is a permutation.  

Second, we need an additional assumption guaranteeing some form of uniqueness of the optimal solution.  

\begin{definition}[Canonical Uniqueness]
Let $\hat{\Phi}$ be a dictionary that attains the infimum OPT in \eqref{eq:GIDL_opt_FT_lagrangian}.  We say that $\hat{\Phi}$ is \emph{canonically unique} if any $\tilde{\Phi}$ that attains an objective value of OPT$+\epsilon$ guarantees $\tilde{\Phi}$ is close to $\hat{\Phi}$ in that
\begin{equation*}
D(\hat{\Phi} , \tilde{\Phi} ) \leq \delta(\epsilon) ,
\end{equation*}
for some function satisfying $\delta(\epsilon) \downarrow 0$ as $\epsilon \downarrow 0$.
\end{definition}

\begin{proposition} \label{thm:dictapprox}
Let $\hat{\Phi}^{\hat{G}}$ be an optimal dictionary to \eqref{eq:GIDL_opt_FT_lagrangian}, and suppose it is canonically unique.  Suppose $\{ \hat{H}_k \} \subseteq \hat{G}$ is an increasing subsequence (in the inclusion sense) such that $\hat{H}_k \uparrow \hat{G}$. 
Then
\begin{equation*}
D(\hat{\Phi}^{\hat{H}_k}, \hat{\Phi}^{\hat{G}}) \rightarrow 0  \qquad \text{ as } \qquad k \rightarrow \infty.
\end{equation*}
\end{proposition}

One might wonder if the assumption on canonical uniqueness is unnecessarily strong or vacuous.  To address the first point, we briefly remark that if there are multiple dictionaries that achieves the same objective value in \eqref{eq:GIDL_opt_FT_lagrangian} but are not equivalent, the solution to the truncated problem could in principle also be close to any of these dictionaries.  It would then be not possible to state any form of convergence.  To address the second point, in Section \ref{sec:so3} we demonstrate numerical experiment using synthetic data for the group SO(3).  In the experiment, our algorithm succeeds at recovering the ground truth dictionary (up to an equivalence), thus showing that recovering the optimal dictionary is in principle possible, at least for synthetically generated data.

\section{Learning SO(2) Invariant Dictionaries}

In following sections, we describe the full program of learning a dictionary that is invariant to some groups.  For each example, we characterize the irreducible representations and we provide spectrahedral descriptions of the associated atomic norm.  

\subsection{Spectrahedral descriptions of the SO(2) orbitope}

A \emph{spectrahedron} is a convex set that can be described as the intersection of the cone of positive semidefinite (PSD) matrices with an affine sub-space
\begin{equation*}
\{ X : X \succeq 0, \langle A_i , X \rangle = \mathrm{tr}(A_i X) = b_i, 1 \leq i \leq d \} \subseteq \mathbb{S}^{d}.
\end{equation*}
A \emph{semidefinite program} (SDP) is a convex programming instance in which we minimize a linear functional over spectrahedra 
\begin{equation*}
\min \quad \langle C, X \rangle \quad \mathrm{s.t} \quad \langle A_i, X \rangle = b_i,  1 \leq i \leq d,\quad X \succeq 0.
\end{equation*}
It extends \emph{linear programs} (LPs), which are convex programming instances in which we minimize a linear functional over polyhedra.  SDPs are an important class of convex programs because they possess expressive modeling power and we have tractable algorithms for solving them \cite{NesNem:94,Ren:01}.

\textbf{Caratheodory Orbitopes.}  Let $v^{(j)}(\theta)$ be the following vector
\begin{equation*}
v^{(j)}(\theta) = \left(\exp( 0 \cdot i \theta), \exp( 1 \cdot i \theta) ,\ldots, \exp(j \cdot i \theta) \right)^T \in \mathbb{R} \times \mathbb{C}^j \subset \mathbb{C}^{(j+1)}.
\end{equation*}

\noindent \emph{Notation.}  In what follows, we will denote the first coordinate by $0$.  The entry will typically be real.





\begin{definition}[Caratheodory Orbitope, \cite{SSS:11}]

The Caratheodory Orbitope is defined as
\begin{equation*}
\mathcal{C}_j := \mathrm{conv}( \{ v^{(j)}(\theta) : \theta \in [0, 2\pi ) \} ).
\end{equation*}
\end{definition}

The following result gives a description of $\mathcal{C}_j$ as the feasible region of a SDP:

\begin{proposition} \label{thm:caratheodory_psdtoep}
We have
\begin{equation*}
\mathcal{C}_j = \{ z : z = Z_{:0}, Z_{0,0} = 1, Z  \text{ is PSD Hermitian Toeplitz} \} \subset \mathbb{C}^{j+1}.
\end{equation*}
Here, $Z_{:0}$ denotes the leftmost column of $Z$.
\end{proposition}

The proof of this result relies on the following result whereby every Toelitz PSD matrix admits a Vandermonde decomposition:
\begin{proposition}[\cite{Caratheodory:11,CarFej:11,Toeplitz:11}] \label{thm:vandermonde}
Let $X$ be PSD (Hermitian) Toeplitz.  Then $X$ admits a Vandermonde decomposition of the form $X = VDV^\dagger$, where $V$ is a Vandermonde matrix and $D$ is a diagonal matrix with positive entries.
\end{proposition}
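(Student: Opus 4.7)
The plan is to exploit the classical correspondence between positive semidefinite Hermitian Toeplitz matrices and finitely-supported nonnegative measures on the unit circle. Writing $T = (t_{i-j})_{i,j=0}^{n-1}$ with $t_{-k} = \overline{t_k}$, the goal becomes producing $\theta_1, \ldots, \theta_r \in [0, 2\pi)$ and $d_1, \ldots, d_r > 0$ such that $t_k = \sum_{j=1}^{r} d_j\, e^{ik\theta_j}$ for $|k| \leq n-1$, which is exactly the identity $T = V D V^\dagger$ read entrywise. The argument naturally splits according to whether $T$ is singular or strictly positive definite.

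In the singular case, I would pick any nonzero $u \in \ker(T)$ and form the polynomial $p(z) = \sum_{k=0}^{n-1} u_k z^k$. The technical heart of the proof is the claim that all $n-1$ roots of $p$ (counted with multiplicity) lie on the unit circle. One argues this by assuming a root $z_0$ lies strictly inside (resp.\ outside) the unit disk, factoring $p(z) = (z-z_0) q(z)$, and then using the Toeplitz annihilation $Tu = 0$ together with the shift structure to exhibit a nonzero vector $w$ built from the coefficients of $q$ with $w^\dagger T w < 0$, contradicting $T \succeq 0$. The distinct roots $e^{i\theta_1}, \ldots, e^{i\theta_m}$ on the circle then supply the Vandermonde nodes; since $v(\theta_1), \ldots, v(\theta_m)$ are linearly independent, the weights $d_j > 0$ are determined uniquely by restricting to any maximal nonsingular Vandermonde subsystem of the equations $t_k = \sum_j d_j e^{ik\theta_j}$. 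Consistency on the remaining Toeplitz entries follows because $T$ and $V D V^\dagger$ have the same column span (namely $\operatorname{span}\{v(\theta_j)\}$) and agree on a generating set of columns.

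If instead $T$ is strictly positive definite, I would extend $T$ to an $(n+1) \times (n+1)$ Hermitian Toeplitz matrix $\tilde T$ by adjoining a new entry $t_n$. As $t_n$ is varied along a ray, the determinant of $\tilde T$ is a concave function whose largest admissible value (ensuring $\tilde T \succeq 0$) is attained at a point where $\tilde T$ becomes singular; this is a routine Schur-complement computation. Applying the singular case to this extension produces a Vandermonde decomposition $\tilde T = \tilde V \tilde D \tilde V^\dagger$ with at most $n$ atoms, and the top-left $n \times n$ principal submatrix of this identity yields the desired factorization of $T$ with positive weights.

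The main obstacle is establishing that the kernel polynomial $p(z)$ has all its roots on the unit circle in the singular case; this is the analytic core of the classical Caratheodory-Fejer-Toeplitz theorem, and everything else (the construction of the PSD extension, the solution of the Vandermonde system for the $d_j$, and the verification that weights are strictly positive) reduces to routine linear algebra given that single fact.
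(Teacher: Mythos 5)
The paper offers no proof of this proposition: it is the classical Carath\'eodory--Fej\'er--Toeplitz theorem and is cited directly to the 1911 sources, so there is no internal argument to compare against. Your overall strategy --- reduce the positive definite case to the singular case by a one-step singular PSD Toeplitz extension, then read the Vandermonde nodes off the roots of a kernel polynomial --- is the standard classical route, and the extension step (the admissible values of $t_n$ form a closed disk whose boundary points make $\tilde T$ singular) is essentially correct.

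However, the singular case as you state it contains a genuine error. You take an \emph{arbitrary} nonzero $u \in \ker(T)$ and claim that all roots of $p(z) = \sum_k u_k z^k$ lie on the unit circle. This is false whenever $\dim \ker(T) > 1$. For example, let $T$ be the $3 \times 3$ all-ones matrix, which is PSD Toeplitz of rank one. The vector $u = (2,-1,-1)^T$ lies in $\ker(T)$, yet $p(z) = 2 - z - z^2 = -(z-1)(z+2)$ has the root $-2$ off the circle. (Relatedly, $p$ need not have degree $n-1$, since $u_{n-1}$ may vanish for a kernel vector.) The correct classical argument does not use an arbitrary kernel vector: one first shows that if $r = \mathrm{rank}(T)$ then the leading $r \times r$ principal submatrix is nonsingular, then passes to the $(r+1) \times (r+1)$ leading block, whose kernel is one-dimensional and spanned by a vector with nonvanishing last coordinate; for \emph{that} degree-$r$ polynomial one proves the $r$ roots are distinct and lie on the circle, and these supply the nodes. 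The positivity of the weights, the fact that the number of nodes equals the rank, and the agreement with all entries of $T$ then follow cleanly --- precisely the points your ``maximal nonsingular Vandermonde subsystem'' and ``same column span'' remarks are papering over. As written, the technical heart of your argument is applied to the wrong object, so the proof does not go through without this repair.
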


For a complete proof of Proposition \ref{thm:caratheodory_psdtoep}, see, for instance, \cite{SSS:11}.


\subsection{Learning SO(2)-invariant Dictionaries} \label{sec:so2}  

Consider learning a dictionary for signals over an interval that is invariant under continuous cyclic shifts, or learning a dictionary for images that is invariant under rotations about the origin.  The appropriate group for modeling such symmetries is SO(2).  In what follows, we describe the full program for learning SO(2)-invariant dictionaries.  In particular, the regular Fourier series play an important role.

\textbf{Irreducible representations.}  Concretely, consider the cyclic group $G = S^1 := \{ \exp( i \theta) : \theta \in [0,2\pi) \}$ where multiplication is addition (in the argument) modulo $2\pi$.  The irreducibles representations are the homomorphisms
\begin{equation*}
\rho_n : \exp(i \theta) \mapsto \exp(i n \theta), \qquad n \in \{ \ldots, -2,-1,0,1,2,\ldots\}.
\end{equation*}
These are all one-dimensional.  Recall that every function $f$ over $S^1$ can be expressed as follows
\begin{equation*}
f(\theta) = \sum_{n = - \infty}^{\infty} c_n \exp(i n \theta), \qquad c_n \in \mathbb{C}.
\end{equation*}
This is precisely the regular Fourier series for periodic functions.  Here, $c_n$ are the Fourier coefficents. 

\textbf{Atomic norm.}  Consider an element $g := \exp(i \alpha) \in S^{1}$.  Then $g$ acts on $f(\theta)$ as follows
\begin{equation*}
g \cdot f(\theta) = f( g^{-1} \theta) = \sum_{n = - \infty}^{\infty} c_n \exp(i  n (\theta - \alpha )) = \sum_{n = - \infty}^{\infty} c_n \exp( -i \alpha n) \exp(i  n \theta) .
\end{equation*}
In terms of the Fourier coefficients, the action of $g$ is as follows
\begin{equation*}
g : c_n \mapsto c_n \exp( -i \alpha n).
\end{equation*}
Consequently, the atomic set is the collection of diagonal operators of the following form
\begin{equation} \label{eq:SO2_atomicset}
\left \{ \mathrm{diag}\left( ( \ldots, \exp( -1 \cdot i \alpha), \exp( 0 \cdot i \alpha), \exp( 1 \cdot i \alpha), \ldots ) \right) : \theta \in [0,2\pi) \right \}.
\end{equation}
Here, we say that an operator $t \in L^2(S^1) \rightarrow L^2(S^1)$ is diagonal if it acts on the Fourier coefficients as follows
\begin{equation*}
t: ( \ldots  , c_{-1}, c_0, c_1, \ldots ) \mapsto ( \ldots, t_{-1,-1} c_{-1}, t_{0,0} c_0, t_{1,1} c_1, \ldots ).
\end{equation*}
The linear span of \eqref{eq:SO2_atomicset} are diagonal operators such that the $j$-th and $-j$-th entry are conjugate.  

Consider a truncated basis $\tilde{G} = \{ \exp(-iN \theta), \ldots, \exp(iN \theta) \}$.  The truncated atomic norm $\| \cdot \|_{\tilde{G}}$ is the convex function induced by the set
\begin{equation*}
\mathrm{conv}( \{ ( \exp(-N \cdot i \theta) , \ldots, \exp(-1 \cdot i \theta),   \exp(0 \cdot i \theta), \exp(1 \cdot i  \theta), \ldots, \exp(N \cdot i \theta) )^T : \theta \in [0,2\pi ) \} ).
\end{equation*}
Given a vector in $\mathbb{C}^{2j+1}$ such that the $j$-th and the $-j$-th coordinate are conjugate pairs (we let $0$ denote the middle coordinate), we denote $\mathbf{x}^\downarrow : = (x_{0},x_{1},\ldots,x_{j})^T \in \mathbb{C}^{j+1}$.  By combining \eqref{thm:caratheodory_psdtoep} with a simple reflection argument, we have the following (see also \cite{Soh:20})


\begin{equation*}
\left\|  \mathbf{x} \right\|_{\tilde{G}} ~~=~~ \inf \{  \lambda_+ + \lambda_- :  \mathbf{x}^\downarrow = \lambda_+ \mathbf{z}^{+} - \lambda_- \mathbf{z}^{-} , ~ \mathbf{z}^{+} \in \mathcal{C}_j, ~ \mathbf{z}^{-} \in \mathcal{C}_j, \lambda_+, \lambda_- \geq 0 \}.
\end{equation*}

The atomic norm is expressible via a SDP.  To do so, we instead consider the cone generated by $\mathcal{C}_j$, which is obtained by omitting the normalization constraint $Z_{0,0}=1$ in Proposition \ref{thm:caratheodory_psdtoep}.  This leads to the following \cite{Soh:20}
\begin{equation*}
\begin{aligned}
\|  \mathbf{x} \|_{\tilde{G}} ~~=~~ \inf z_{0,+} + z_{0,-} ~~\mathrm{s.t.}~~  &  \left( \begin{array}{c} x_0 \\ x_1 \\ \vdots \\ x_j \end{array} \right) = 
\left( \begin{array}{c} z_{0,+} \\ z_{1,+} \\ \vdots \\ z_{j,+} \end{array} \right) -
\left( \begin{array}{c} z_{0,-} \\ z_{1,-} \\ \vdots \\ z_{j,-} \end{array} \right) \\
& ~~
\left( \begin{array}{ccc}
z_{0,+} & \overline{z_{1,+}} & \ldots \\
z_{1,+} & * & * \\
\vdots & * & *
\end{array} \right),
\left( \begin{array}{ccc}
z_{0,-} & \overline{z_{1,-}} & \ldots \\
z_{1,-} & * & * \\
\vdots & * & *
\end{array} \right) \quad \text{ is PSD Toeplitz}
\end{aligned}
\end{equation*}

\textbf{Connections to Convolutional Dictionary Learning.}  Our preceding discussion recovers precisely the same framework of learning a \emph{continuously} shift invariant dictionary, which was earlier described in \cite{Soh:20}.  Interestingly, while the work in \cite{Soh:20} motivates the procedure as a continuous analog of convolutional dictionary learning -- these are dictionaries invariant to integer shifts or the cyclic group $\mathbb{Z} / n \mathbb{Z}$, we derive the same procedure by considering functions over SO(2).  This is also somewhat expected -- $\mathbb{Z} / n \mathbb{Z}$ are the finite sub-groups of the cyclic group over $S^1$, so learning SO(2) invariant dictionary is in a precise sense the continuous limit of convolutional dictionary learning (c.f. \cite{SFB:19}).

\section{Learning O(2) Invariant Dictionaries} \label{sec:o2} 

In this section, we describe the full program of learning a dictionary that is invariant to O(2).  One example of an application is for images that are invariant under rotations about the origin and \emph{reflections} about any line through the origin.  Our discussion serves as a companion to the earlier discussion for the group SO(2) in Section \ref{sec:so2}.

\textbf{Irreducible representations.}  Concretely, consider the matrix group with elements of the form
\begin{equation*}
\left\{ 
R(\theta) := \left( \begin{array}{cc} \cos \theta & \sin \theta \\ - \sin \theta & \cos \theta \end{array} \right), 
L(\theta) := \left( \begin{array}{cc} \cos \theta & \sin \theta \\ \sin \theta & - \cos \theta \end{array} \right) : \theta \in [0,2\pi) 
\right\}.
\end{equation*}
The irreducible representations are the mappings $\rho_k : (R(\theta),L(\theta)) \mapsto ( \rho_{R,k}( \theta),\rho_{L,k}( \theta) )$, where
\begin{equation*}
\rho_{R,k}( \theta) = \left( \begin{array}{cc}
\exp( i k \theta)  &  \\ & \exp( -i k \theta)
\end{array} \right), \qquad
\rho_{L,k}( \theta) = \left( \begin{array}{cc}
& \exp( i k \theta) \\ \exp( -i k \theta) &
\end{array} \right).
\end{equation*}
The irreducible representations are all two-dimensional, except the trivial one corresponding to $k=0$.  The Fourier coefficients are $2\times 2$ complex matrices, and every function over O(2) can be expressed as
\begin{equation} \label{eq:o2_fourier}
f(g) = c_0 +  \sum_{k=1}^{\infty} 2 \mathrm{tr}( C_k \rho_{k}(g) ), \qquad g \in O(2),
\end{equation}
where $C_k$ are the matrix-valued Fourier coefficients.

\textbf{Atomic norm.}  Truncate the basis by taking the upper limit in \eqref{eq:o2_fourier} to be $N$.  Then the action by an element in O(2) is the block operator
\begin{equation*}
(1, \rho_{1}(\theta), \ldots, \rho_{N}(\theta)).
\end{equation*}
Consider an operator of the form
\begin{equation*}
Z := (z_0, Z_{1}, \ldots , Z_{N} ) \in \mathbb{R} \times (\mathbb{C}^{2 \times 2})^{\otimes N}.
\end{equation*}
The atomic norm is given by
\begin{equation*}
\begin{aligned}
\left\| Z \right\|_{\tilde{G}} ~~= & ~~ \inf \quad z_{L,+,0} + z_{L,-,0} + z_{R,+,0} + z_{R,-,0} \\
& ~~ \mathrm{s.t.} \quad \mathbf{z}_{L} = \mathbf{z}_{L,+} - \mathbf{z}_{L,-}, ~~ \mathbf{z}_{R} = \mathbf{z}_{R,+} - \mathbf{z}_{R,-} \\
&  \quad \quad \quad Z = \left( z_{R,0} + z_{L,0} ,
\left( \begin{array}{cc}
z_{R,1} & z_{L,1} \\ z_{L,-1} & z_{R,-1}
\end{array} \right), \ldots, \left( \begin{array}{cc}
z_{R,N} & z_{L,N} \\ z_{L,-N} & z_{R,-N}
\end{array} \right) \right) \\
& \quad \quad \quad  \mathbf{z}_{L,+}, \mathbf{z}_{L,-}, \mathbf{z}_{R,+}, \mathbf{z}_{R,-} \in \mathcal{C}_{N}
\end{aligned}.
\end{equation*}

\section{Learning SO(3) Invariant Dictionaries} \label{sec:so3}

We describe the program when the group is SO(3).  The ideas we lay out provide the foundation for learning dictionaries for spherical images that are rotation and translation invariant~\cite{CarlosSO(3):18,Kon07,Kondor:08}.

\textbf{Euler angles.}  The group SO(3) can be parameterized by three angles $\alpha$, $\beta$, and $\gamma$ known as the \emph{Euler angles}.  Concretely, every element $R \in \mathrm{SO}(3)$ decomposed as a rotation about the $z$-axis, followed by a rotation about the $y$-axis, and then a rotation about the $z$-axis
\begin{equation*}
R(\alpha,\beta,\gamma) = Z(\alpha) Y(\beta) Z(\gamma), \qquad \text{ where } \qquad \alpha,\gamma \in [0,2\pi), \beta \in [0,\pi).
\end{equation*}
Here, $Z$ and $Y$ are rotation matrices about the $z$ and $y$ axes given as follows
\begin{equation*}
Z(\theta) = 
\left(\begin{array}{ccc}
\cos(\theta) & \sin (\theta) & \\ -\sin(\theta) & \cos(\theta) & \\ & & 1 
\end{array}\right), \qquad 
Y(\beta) = 
\left(\begin{array}{ccc}
\cos(\theta) & & \sin (\theta) \\ & 1 & \\ -\sin(\theta) & & \cos(\theta)
\end{array}\right).
\end{equation*}

\textbf{Irreducible representations.}  The irreducible representations of SO(3) are known as Wigner $D$-matrices, and these are indexed by $j \in \{0,1,\ldots\}$.  For a fixed $j$, the corresponding irreducible representation is a square matrix of dimension $(2j+1) \times (2j+1)$ with the following entries
\begin{equation} \label{eq:wigner_terms}
D^{(j)}_{m,m^\prime} (\alpha, \beta,\gamma) = \exp( - i m \alpha) \times d^{(j)}_{m^\prime, m} (\beta) \times \exp( - i m^\prime \gamma).
\end{equation}
Here, $m$ and $m^\prime$ are the indices of the matrix and they span $\{-j, -j+1,\ldots j \}$.  The scalar $d^{(j)}_{m^\prime, m}$ is defined as
\begin{equation} \label{eq:wigner_terms2}
d^{(j)}_{m^\prime, m} (\beta) = \sum_{k} c_k(j,m,m^\prime) \times (\cos \frac{\beta}{2})^{2j-2k+m-m^\prime} (\sin \frac{\beta}{2})^{2k-m+m^\prime},
\end{equation}
where
\begin{equation} \label{eq:wigner_terms3}
c_k = (-1)^{k-m+m^\prime} \frac{\sqrt{(j+m)!(j-m)!(j+m^\prime)!(j-m^\prime)!}}{(j+m-k)!(j-k-m^\prime)!(k-m+m^\prime)!k!}.
\end{equation}
In the expression \eqref{eq:wigner_terms2}, the sum over the index $k$ is such that the arguments in the factorial in \eqref{eq:wigner_terms3} are non-negative.

\subsection{Multivariate Extensions of the Caratheodory Orbitope}

The irreducible representations of SO(3) contains trigonometric functions of \emph{three} angles.  To describe the atomic norm, we need a multivariate analog of the Caratheodory orbitope.


\begin{definition}[Hermitian Block Toeplitz]  Let $T_{x_1,\ldots, x_r, y_1,\ldots, y_r} \in (\mathbb{C}^{n_1 \times n_2 \times \ldots \times n_r})^{\otimes 2}$ be a tensor.  We say that $T$ has a \emph{block Toeplitz} structure if it satisfies the following properties
\begin{enumerate}
\item $T_{x_1,\ldots, x_r, y_1,\ldots, y_r} = T_{\tilde{x}_1,\ldots, \tilde{x}_r, \tilde{y}_1,\ldots, \tilde{y}_r}$ if $x_k - y_k = \tilde{x}_k - \tilde{y}_k$ for some $k$, and $(x_l, y_l) = (\tilde{x}_l , \tilde{y}_l)$ for all $l \neq k$,
\item $T_{x_1,\ldots, x_r, y_1,\ldots, y_r} = \overline{T_{\tilde{x}_1,\ldots, \tilde{x}_r, \tilde{y}_1,\ldots, \tilde{y}_r}}$ if $x_k - y_k = -(\tilde{x}_k - \tilde{y}_k)$ for some $k$, and $(x_l, y_l) = (\tilde{x}_l , \tilde{y}_l)$ for all $l \neq k$.
\end{enumerate}
Equivalently, $T$ is Hermitian Block Toeplitz if the $n_k \times n_k$-dimensional matrix obtained by fixing all except the $k$-th and the $r+k$-th coordinate is Hermitian Toeplitz as a matrix for all choices of $k$, $1\leq k \leq r$, and all choices of coordinates $x_1,\ldots,\widehat{x_k},\ldots,x_r, y_1,\ldots,\widehat{y_k},\ldots,y_r$.  (Here, $\widehat{x_k}$ means we omit $x_k$.)
\end{definition}

Given a tensor $T \in (\mathbb{C}^{n_1 \times n_2 \times \ldots \times n_r})^{\otimes 2}$ and a tensor $u \in \mathbb{C}^{n_1 \times \ldots \times n_r}$, we define the tensor $T(u)$ obtained via a  contraction over the indices $y_1,\ldots,y_r$  as follows
\begin{equation*}
[T(u)]_{x_1,\ldots,x_r} = \sum_{y_1,\ldots,y_r} T_{x_1,\ldots,x_r,y_1,\ldots,y_r} u_{y_1,\ldots,y_r}.
\end{equation*}
Equivalently, if we re-arrange (vectorize) the entries of $u$ so that it is a $n_1 \times \ldots \times n_r$-dimensional vector and the entries of $T$ to be a $n_1 \times \ldots \times n_r$ by $n_1 \times \ldots \times n_r$ square matrix, then the regular matrix-vector product of $T$ and $u$ is precisely the re-arranged (vectorized) analog of $T(u)$.

Given two tensors $u,v \in \mathbb{C}^{n_1 \times \ldots \times n_r}$, we define the Hermitian inner product
\begin{equation*}
\langle u, v \rangle = \sum_{x_1,\ldots,x_r} u_{x_1,\ldots,x_r} \overline{v_{x_1,\ldots,x_r}}.  
\end{equation*}

\begin{definition}[Positive Semidefinite Tensor]  With a slight abuse of notation, we say that a tensor $T \in (\mathbb{C}^{n_1 \times n_2 \times \ldots \times n_r})^{\otimes 2}$ is \emph{positive semidefinite} (PSD) if for any tensor $u \in \mathbb{C}^{n_1 \times \ldots \times n_r}$, one has $\langle u, V (u) \rangle \geq 0$.  Equivalently, $T$ is PSD if the $(n_1 \ldots n_r) \times (n_1 \ldots n_r)$-dimensional matrix obtained by rearranging the entries of $T$ so that the first $r$ coordinates form a single coordinate and the next $r$ coordinates form another coordinate is PSD as a matrix.
\end{definition}


\begin{definition}[Multivariate Caratheodory Orbitope]
\begin{equation} \label{eq:convexhull_outerproducts}
\mathcal{C}_{j_1,\ldots,j_k} := \mathrm{conv} \left( \big\{ v^{(j_1)}(\theta_1) \otimes \ldots \otimes v^{(j_k)}(\theta_k)  : 0 \leq \theta_1,\ldots,\theta_k \leq 2 \pi \big\} \right).
\end{equation}    
\end{definition}

Our goal is to try derive a description of $\mathcal{C}_{j_1,\ldots,j_k}$, or at least an outer approximation, as the feasible region of a SDP.  as such, we mimic the ideas in Propositions \ref{thm:caratheodory_psdtoep}
and \ref{thm:vandermonde}.

\begin{proposition}\label{thm:blocktoep}
The set $\mathcal{C}_{j_1,\ldots,j_k}$ is contained in
\begin{equation} \label{eq:convhull_psdtoep}
\begin{aligned}
C_{\mathrm{Block Toep.}} ~:=~ \big\{ z \in \mathbb{C}^{(j_1+1)\times \ldots (j_r+1)} ~ : & ~ z = Z_{*,\ldots,*, 0,\ldots, 0} , \\
& ~ Z \in (\mathbb{C}^{(j_1+1)\times \ldots (j_r+1)})^{\otimes 2} \text{ is PSD block Toeplitz}, \\
& ~  Z_{x_1,\ldots,x_r, x_1,\ldots, x_r} = 1 \text{ for all } x_1,\ldots,x_r  \big\} .
\end{aligned}
\end{equation}
\end{proposition}

\begin{proof}[Proof of Proposition \ref{thm:blocktoep}]   First we check that points of the form $z = v^{(j_1)}(\theta_1) \otimes \ldots \otimes v^{(j_k)}(\theta_k)$ are inside \eqref{eq:convhull_psdtoep}.  Let $Z = v^{(j_1)}(\theta_1) \otimes \ldots \otimes v^{(j_k)}(\theta_k) \otimes \overline{v^{(j_1)}(\theta_1)} \otimes \ldots \otimes \overline{v^{(j_k)}(\theta_k)}$.  By construction, we have $ Z_{x_1,\ldots,x_r,y_1 = x_1,\ldots,y_r = x_r} = 1 $, and $Z$ is PSD block Toeplitz.  To see that $z = Z_{*,\ldots,0, \ldots}$, we note that the $0$-th coordinate of $\overline{v^{(j_1)}(\theta_1)}, \ldots, \overline{v^{(j_k)}(\theta_k)}$ are all ones.  Combining the previous two statements, we have $z$ is contained in \eqref{eq:convhull_psdtoep}.  Last, since \eqref{eq:convhull_psdtoep} is convex, the convex hull of \eqref{eq:convexhull_outerproducts} is also contained in \eqref{eq:convhull_psdtoep}.
\if0
Now let $x$ be a feasible point in \eqref{eq:convhull_psdtoep}.  Let $Z  \in \mathbb{C}^{(j_1+1)\times \ldots (j_r+1) \times (j_1+1)\times \ldots (j_r+1)}$ be such that $x = Z_{*,\ldots,0, \ldots}$.  Let $U_k$ be the square matrix corresponding to choosing the zero-th coordinate in all indices besides the $k$-th and the $r+k$-th index to be $0$ in $Z$.  Then $U_k$ is a PSD Toeplitz matrix.  By Proposition \ref{thm:vandermonde}, $U_k$ admits a Vandermonde decomposition
\begin{equation*}
U_k = \sum_{i} d_{k,i} v^{(k)}_i(\theta_{i}^{(k)}) v^{(k)}_i(\theta_{i}^{(k)})^\dagger \qquad \text{where} \qquad d_{k,i} \geq 0.
\end{equation*}
Define
\begin{equation*}
\tilde{Z} = \sum_{i_1,\ldots,i_r} d_{1,i_1} \times \ldots \times d_{r,i_r} \times v^{(1)}_i(\theta_{1}) \otimes \ldots \otimes  v^{(r)}_i(\theta_{r}) \otimes  \overline{v^{(1)}_i(\theta_{1})} \otimes \ldots \otimes \overline{v^{(r)}_i(\theta_{r}) }.
\end{equation*}
We claim that $\tilde{Z} = Z$.  Suppose that this is indeed true.  Then
\begin{equation*}
\sum_{i_1,\ldots,i_r} d_{1,i_1} \times \ldots \times d_{r,i_r} = \prod \left( \sum_{i_k} d_{k,i_k} \right) = 1. 
\end{equation*}
Therefore, $Z$ is the convex combination of points of the form $v^{(j_1)}(\theta_1) \otimes \ldots \otimes v^{(j_k)}(\theta_k) \otimes \overline{v^{(j_1)}(\theta_1)} \otimes \ldots \otimes \overline{v^{(j_k)}(\theta_k)}$.  In particular, by identifying $x = Z_{*,\ldots,0, \ldots}$, we can also write $x$ as a convex combination of points of the form $v^{(j_1)}(\theta_1) \otimes \ldots \otimes v^{(j_k)}(\theta_k)$.

It leaves to show that $\tilde{Z} = Z$.  Specifically, we need to show that the entries in the $(x_1,\ldots,x_r,y_1,\ldots,y_r)$-th coordinate coincide.  By using the block Toeplitz structure, it suffices to check that the entries in the $(|x_1-y_1|,\ldots,|x_r-y_r|,0,\ldots,0)$-th coordinate coincide.

By construction the entries in the $(0,\ldots,0,|x_k-y_k|,0,\ldots,0,\ldots,0)$-th coordinate coincide.
\fi
\end{proof}

In contrast with Proposition \ref{thm:caratheodory_psdtoep}, the description in \eqref{eq:convhull_psdtoep} is only guaranteed to be an outer approximation of $\mathcal{C}_{j_1,\ldots,j_k}$.  We are not aware of a tractable description of $\mathcal{C}_{j_1,\ldots,j_k}$, and we have reason to believe the set may in fact be intractable to express.  First, recall that the convex hull of the collection of unit-norm rank-one $(2k)$-tensors is the \emph{tensor nuclear norm} ball (see Proposition 3.1, \cite{FL:18}).  Second, if we stick with the ideas of Proposition \ref{thm:blocktoep} by lifting to the outer product of \eqref{eq:convexhull_outerproducts} with its conjugate, we obtain a collection of unit-norm rank-one tensors with block Toeplitz structure and whose diagonals equal to one.  It is natural to then consider the intersection of the tensor nuclear norm ball with the same affine constraints; however, it is NP-hard to compute the $(2k)$-tensor nuclear norm over $\mathbb{C}$ for $k \geq 2$ (see Theorem 8.10 of \cite{FL:18}).  


\textbf{Tightness of SDP relaxation of multivariate Caratheodory orbitope.}  But just how good is the relaxation in Proposition \ref{thm:blocktoep}?  We investigate this question using numerics.  In the following, we generate random tensors of the form
\begin{equation} \label{eq:randomtensormodel}
T := \sum_{i=1}^{r} c_i = (1/r) \exp( i \theta)  v^{(n)}(\theta_{i,1}) \otimes v^{(n)}(\theta_{i,2}) \otimes v^{(n)}(\theta_{i,3}) \quad \text{where} \quad \theta, \theta_{i,k} \sim \mathsf{Unif}[0, 2\pi).
\end{equation}

In what follows, we compute the Minkowski functional with respect to the relaxation obtained via Proposition \ref{thm:blocktoep} -- this can be done, for instance, via the following SDP
\begin{equation} \label{eq:sdp_relaxation_blocktoep}
\min ~~ (1/2)t + (1/2) Z_{0,0,0,0,0,0} \quad \mathrm{s.t.} \quad 
\left( \begin{array}{cc}
t & \mathrm{vec}(T)^T \\ \mathrm{vec}(T) & \mathrm{mat}(Z)
\end{array} \right) \succeq 0, Z \text{ is block Toeplitz}.
\end{equation}
In general, because \eqref{eq:sdp_relaxation_blocktoep} computes the Minkowski function with respect to a convex set that is an outer convex relaxation of $\mathcal{C}_{j_1,\ldots,j_k}$, the optimal solution to \eqref{eq:sdp_relaxation_blocktoep} bounded above by the atomic norm evaluated at $T$.

Next, $T$ is a linear sum of rank-one tensor.  There are $r$ terms, and each have a coefficient of $1/r$.  As such, the resulting atomic norm evaluated at $T$ is at most the sum of these coefficients, which is one.  In general, it may be smaller, especially for large $r$.

Based on these series of inequalities, if the optimal value to \eqref{eq:sdp_relaxation_blocktoep} is equal to one, we have an instance where the relaxation is tight.  In the following, we perform the following experiment:  For each pair $(n,r)$, $n\in \{1,2\}, r \in \{1,2,\ldots,10\}$, we generate $100$ random tensors and we count the number of instances the optimal value to \eqref{eq:sdp_relaxation_blocktoep} is $> 1 - 10^{-4}$ -- see Table \ref{table:so3relaxation}.  We record the percentage of instances where the optimal value equals $1$. 

\begin{table}[h]
\centering
\begin{tabular}{|c|c|c|c|c|c|c|c|c|c|c|}
\hline
$n = 1$ & $r = 1$ & $r = 2$ & $r = 3$ & $r = 4$ & $r = 5$ & $r= 6$ & $r=7$ & $r=8$ & $r=9$ & $r=10$ \\
\hline
\# successes & 100 & 96 & 81 & 56 & 36 & 18 & 4 & 2 & 0 & 0 \\
\hline \hline
$n = 2$ & $r = 1$ & $r = 2$ & $r = 3$ & $r = 4$ & $r = 5$ & $r= 6$ & $r=7$ & $r=8$ & $r=9$ & $r=10$ \\
\hline
\# successes & 100 & 100 & 98 & 95 & 93 & 82 & 81 & 81 & 77 & 69 \\
\hline
\end{tabular}
\caption{Quality of the convex relaxation of the SO(3) atomic norm.  The table indicates the fraction of instances where the optimal solution equals $1$, which (approximately) indicates how often the relaxation is tight.}
\label{table:so3relaxation}
\end{table}

We observe that, for small values of $r$, the frequency for which the optimal value is equal to one is rather high, which suggests that the convex relaxation in \eqref{eq:convhull_psdtoep} may in fact be quite tight.  As one increases $r$, the frequency drops.  We think that this is likely attributed to the fact that the atomic norm of $T$ is no longer equal to one, rather than indicative of the quality of exactness.

\subsection{SDP relaxation of atomic norm} \label{sec:sdp_atomicnorm_multivariate}

Recall that the atomic set of the irreducible representations is of the form
\begin{equation} \label{eq:so3_regularreps}
\left \{ 
\left( \begin{array}{ccc}
D^{(0)} (\alpha,\beta,\gamma) & &\\
& D^{(1)} (\alpha,\beta,\gamma) & \\
& & \ddots \\
\end{array} \right) :\alpha,\beta,\gamma \leq 2 \pi  \right \}.
\end{equation}
Subsequently, the atomic norm $\|\cdot\|_G$ is the operator norm induced by the convex hull of \eqref{eq:so3_regularreps}.  

Next, we describe the finite dimensional approximation of the atomic norm $\|\cdot\|_G$.  A natural way to truncate the Fourier basis according to the size of the Fourier coefficients.  Let $\tilde{G} : = \hat{G}_N$ be the basis spanned by the Wigner $D$-matrices for all indices $j \leq N$.  Subsequently, the associated atomic norm is induced by the following set
\begin{equation} \label{eq:so3_truncatednorm}
\mathrm{conv} \left( \left \{ 
\left( \begin{array}{ccc}
D^{(0)} (\alpha,\beta,\gamma) & &\\
& \ddots & \\
& & D^{(N)} (\alpha,\beta,\gamma) \\
\end{array} \right) :\alpha,\beta,\gamma \leq 2 \pi  \right \} \right).
\end{equation}

First, we define $V_{\alpha \beta \gamma}^{(j)} := v^{(j)}(\alpha) \otimes v^{(j)}(\beta) \otimes v^{(j)}(\gamma) \otimes \overline{v^{(j)}(\alpha)} \otimes \overline{v^{(j)}(\beta) } \otimes \overline{v^{(j)}(\gamma) }$.  Moving forward, we treat $\alpha$, $\beta$, and $\gamma$ as \emph{symbols} rather than specific numerical values.  Notice at this juncture that all the terms in the Wigner $D$-matrix for all indices $j \leq N$ appear as terms in the tensor $V_{\alpha \beta \gamma}^{(N)}$.  Therefore, there is a linear map $L^{(j)} : (\mathbb{C}^{N+1})^{\otimes 6} \rightarrow (\mathbb{C}^{2j+1})^{\otimes 2}$ such that
\begin{equation} \label{eq:defn_of_L}
D^{(j)} = L^{(j)} (V_{\alpha \beta \gamma}^{(N)}) .
\end{equation}
Let
\begin{equation*}
\tilde{\mathcal{C}}^{N}:=\mathrm{conv} \left(  \big\{ v^{(N)}(\alpha) \otimes v^{(N)}(\beta) \otimes v^{(N)}(\gamma)  : 0 \leq \alpha,\beta,\gamma \leq 2 \pi \big\} \right).    
\end{equation*}
By linearity, the convex hull of \eqref{eq:so3_truncatednorm} is $ \{ ( ~ L^{(0)} (x) , \ldots, L^{(N)} (x) ~)^T \,:\, x \in \tilde{\mathcal{C}}^{N} \}$.  Unfortunately, we are not aware of a tractable description of this set.  Instead, following Proposition \eqref{thm:blocktoep}, an outer approximation of this set is
\begin{equation} \label{eq:so3_relaxation}
\{ ( ~ L^{(0)} (x) , \ldots ,
L^{(N)} (x) 
 ~)^T \, : \, x \in C_{\mathrm{Toep}}^{(N)} \},
\end{equation}
where
\begin{equation*}
\begin{aligned}
C_{\mathrm{Toep}}^{(N)} = \big\{ Z^{+} - Z^{-} : & ~  Z^{+}_{x_1,\ldots,x_r,x_1 ,\ldots,x_r} = 1 , Z^{-}_{x_1,\ldots,x_r,x_1,\ldots,x_r} = 1 \text{ for all } x_1,\ldots, x_r , \\
& ~ Z^{+},Z^{-}  \text{ are PSD block Toeplitz} \big\}.
\end{aligned}
\end{equation*}

\subsection{Tightness of SDP relaxation of SO(3) orbitope} \label{sec:so3_supp} 
Equation \eqref{eq:so3_relaxation} provides an outer approximation of the SO(3) orbitope.  A natural question is, how does this approximation perform when deployed to learn a SO(3) invariant dictionary? 
In what follows, we evaluate the efficacy of the relaxation \eqref{eq:so3_relaxation} via a numerical experiment on synthetic data.  Specifically, for a fixed choice of $j \in \{0,1,2,\ldots\}$, our data is synthetically generated via the model
\begin{equation*}
y^{(i)} = D^{(j)} (\alpha^{(i)}, \beta^{(i)}, \gamma^{(i)}) \phi^\star.
\end{equation*}
In other words, the dictionary is generated by a single element $\{\phi^\star\}$.  Here, $\phi^\star \in \mathbb{C}^{(2j+1) \times (2j+1)}$ is a random complex matrix drawn from the normal distribution and later normalized to be of unit Frobenius norm, while $\alpha,\beta,\gamma \sim \mathsf{Unif}[0,2\pi)$.

In the first experiment, we generate $50$ data points for the choice $j=1$.  Note that because the data are $3\times 3$-dimensional \emph{complex} matrices, the ambient (real) dimension is $18$.  We apply the algorithm described Section \ref{sec:algo}.  To do so, we initialize with a random matrix $\phi_0$, normalized to be of unit norm.  We select our regularization parameter $\lambda = 0.1$.  (Remark: Our results are reflective of a wide range of choices of $\lambda$.)  To track progress of our algorithm, we measure the distance of the iterate from the true generator $\phi^\star$ using the distance measure \eqref{eq:dict_distance} specialized to this instance
$\mathrm{dist}(\phi,\phi^\star) = \inf_{\alpha,\beta,\gamma,\theta} \| \phi - \exp(i \theta) D(\alpha,\beta,\gamma) \phi^\star \|_F^2$.  Here, we include the unitary element $\exp(i \theta)$ because of sign.  Computing the distance measure is difficult in general -- in our implementation, we combine a grid search over $\alpha$, $\beta$, and $\gamma$ with a binary search over $\theta$.  

We show the progress of our algorithm in the left sub-plot of Figure \ref{fig:dictionaryerrors} (solid line with circle nodes).  First, we observe that the initial error estimate (after one iteration) is $\approx 0.8$.  This is useful because it suggests that two generic random dictionaries are in the ball-park distance of $0.8$ apart.  The final estimate has error $\approx 10^{-4}$.  

We compare our results with a baseline using vanilla dictionary learning -- this is akin to applying our framework but disregarding $G$ completely.  In the first instance, we learn using a dictionary element with a single element, and we show the distance of the iterate from $\phi^\star$ in the same sub-plot (solid line with bar nodes).  We notice that the final dictionary estimate attains an error of $\approx 0.1$, which substantially improves upon the initial estimate, but poorer than the solution obtained using our method.  In the second instance, we increase to using $5$ dictionary elements to see if the performance improves (dashed lines).  We use error bars to indicate the distance between the ground truth generator with all dictionary elements.  While the nearest dictionary elements are closer to $\phi^\star$ compared to the previous instance where we only permitted dictionary element, the remaining elements remain far away.  This is notwithstanding the fact that the dictionary is specified by $5$ distinct dictionary elements whereas our learned dictionary is specified by \emph{one} canonical element.

In the second experiment, we repeat the same experimental set-up but for $j=2$ (the number of data-points is $50$, and the choice of regularization parameter is $0.1$).  This time, the ambient (real) dimension is $50$, which is substantial compared to the amount of data.  Nevertheless, by incorporating the appropriate symmetries, our framework succeeds at recovering the correct $\phi^\star$.

\begin{figure}
\centering
\includegraphics[width=0.4\textwidth]{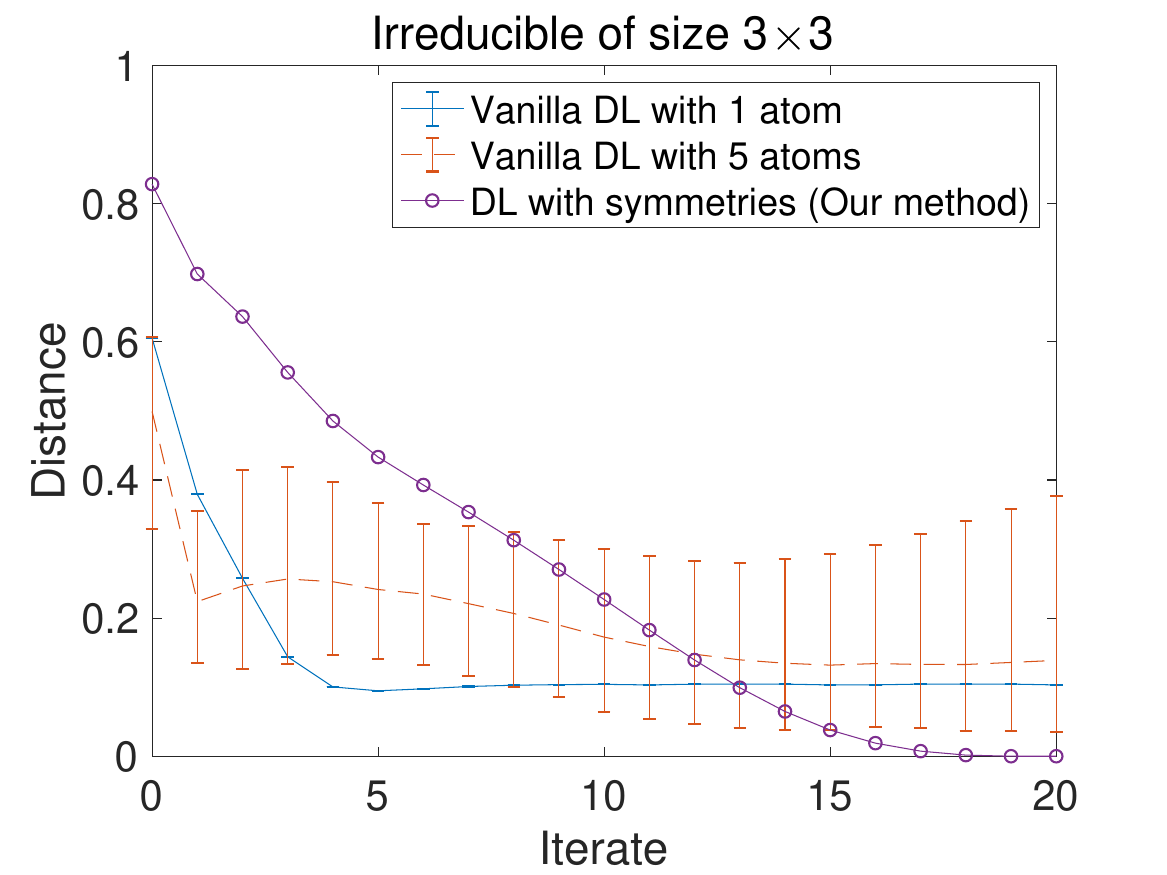}
\includegraphics[width=0.4\textwidth]{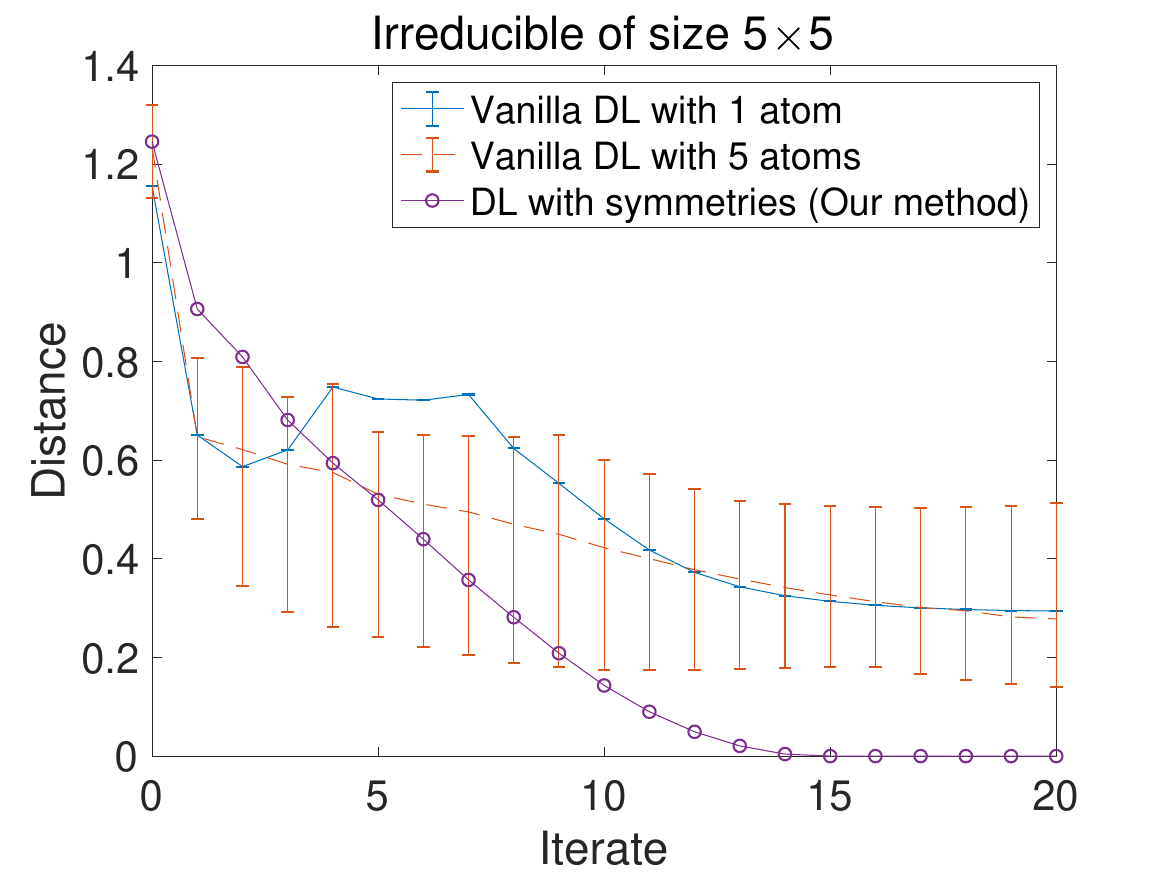}
\caption{Comparison of our framework for recovering a SO(3) dictionary with dictionary learning.  For dictionary learning, the line plot indicate the average distance from the true dictionary, while the bars indicate the minimum and the maximum distance.}
\label{fig:dictionaryerrors}
\end{figure}

In the third experiment, we repeat the same experimental set-up for $j \in \{1,2\}$, but with {\em noisy} data.  To each entry of the data matrix, we add gaussian complex noise $\epsilon_R + i \epsilon_C$, where $\epsilon_R,\epsilon_C$ are drawn from the normal distribution $\mathcal{N}(0,\sigma^2)$, and with varying levels of $\sigma$.  The purpose of this experiment is to investigate if our method is robust to noise.  

We present the results from this numerical experiment in Figure \ref{fig:noisyerrors}.  We make some brief remarks:  First, the recovery of the generating atom becomes substantially more difficult, but nevertheless possible especially at low levels of noise.  Generally speaking, we need more iterations to obtain a reasonable estimate of the generating atom.  The distance between the generating atom and the recovered estimate increases with the level of noise.  We emphasize that the number of data-points ($50$) is rather modest in comparison to the number of parameters to be estimated.  It would be interesting to see if the accuracy of the recovered atom improves with substantially more data.

\begin{figure}
\centering
\includegraphics[width=0.4\textwidth]{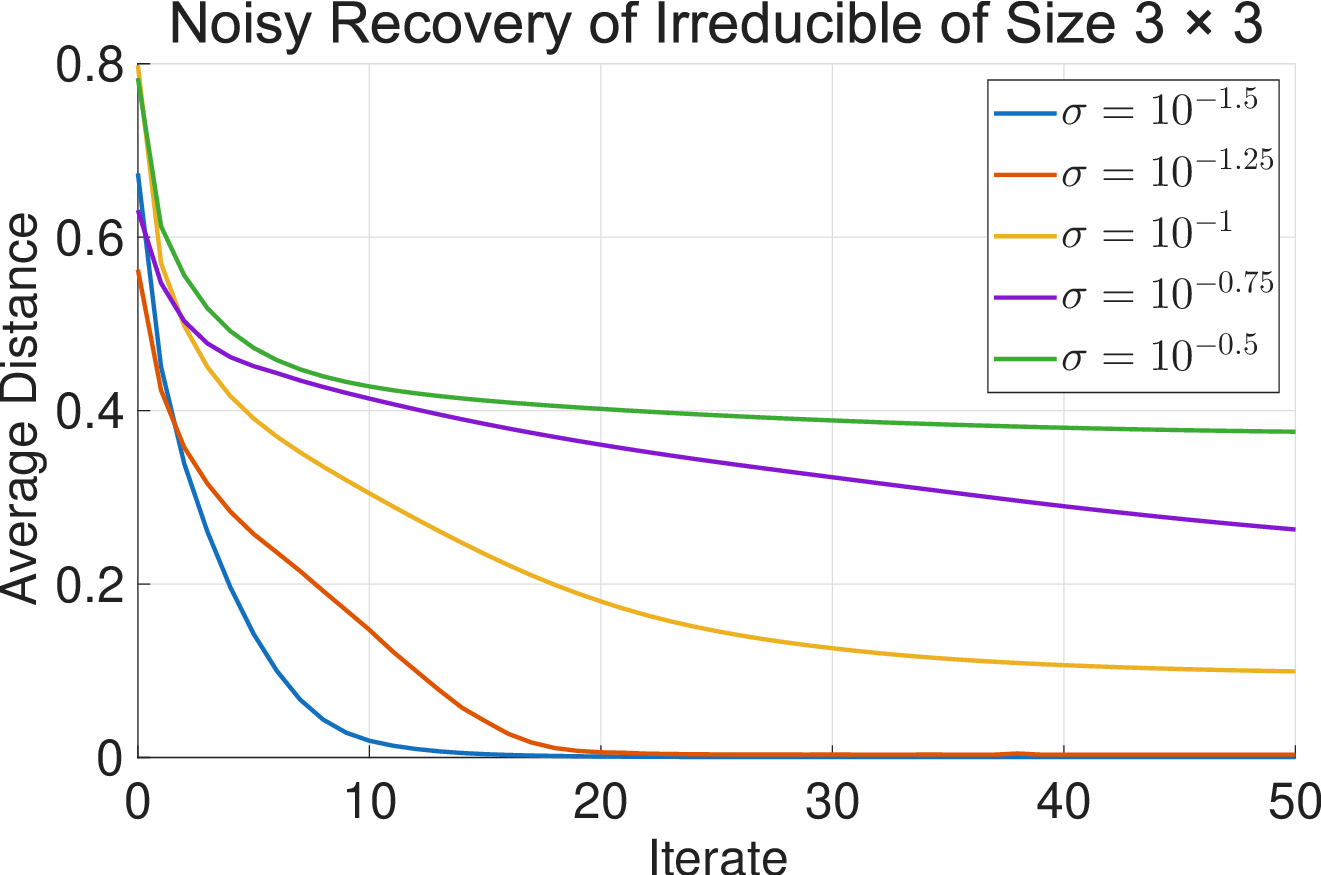}
\includegraphics[width=0.4\textwidth]{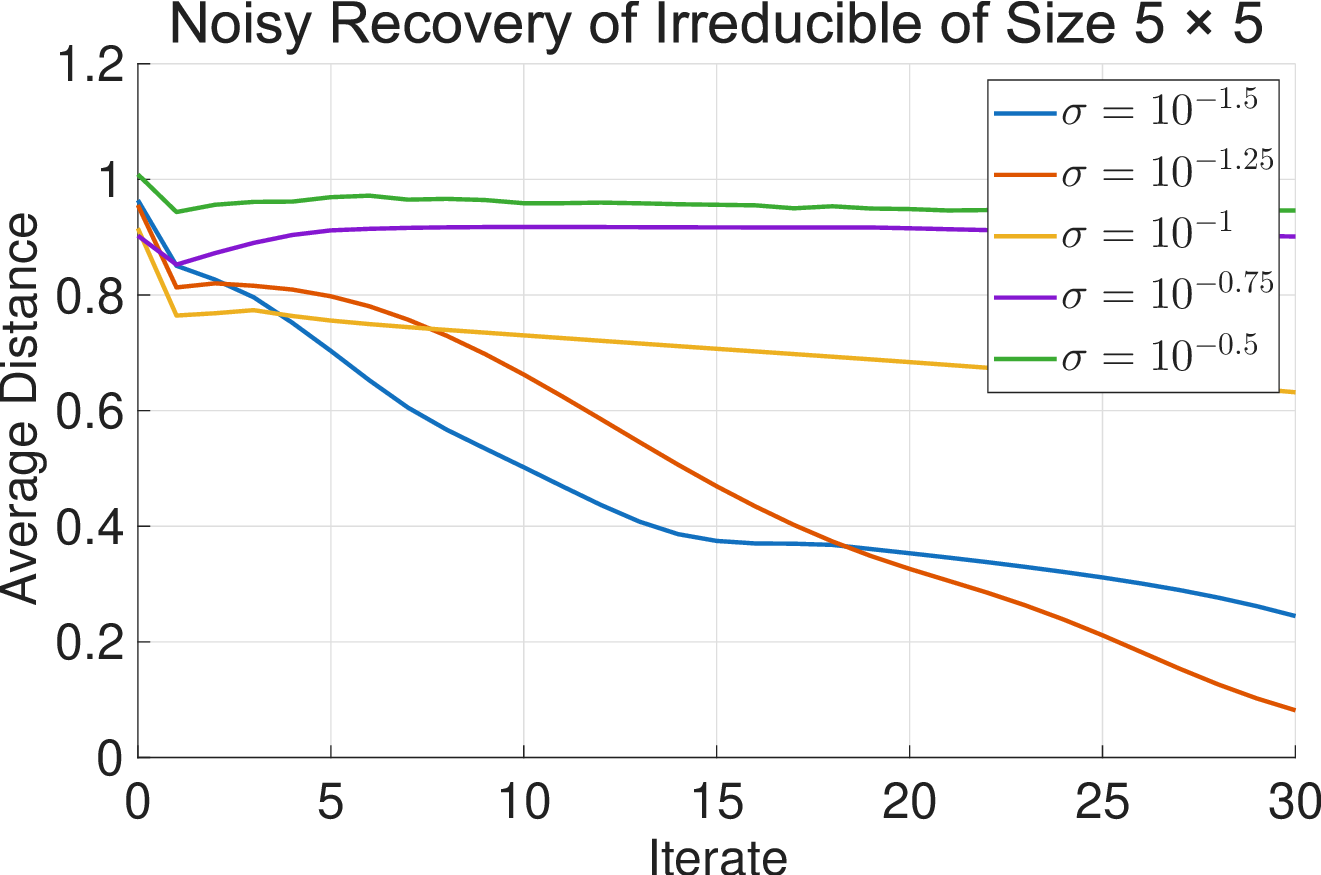}
\caption{Noisy recovery of irreducibles.  The experimental set-up is identical to that in Figure 
\ref{fig:dictionaryerrors}, but with noisy data.}
\label{fig:noisyerrors}
\end{figure}

In summary, the numerical experiments show that we were able to recover the ground truth dictionary elements correctly despite implementing a relaxation of the atomic norm in our procedure.  The numerical results also suggest that our method exhibits some level of robustness to noise.  These observations suggest an interesting direction in our framework -- namely, that applying a convex outer or inner approximation, and especially one that is computationally easier to express, may yield comparable results. 
\subsection{An alternating minimization scheme for learning a SO(3) invariant dictionary} \label{sec:so3cd}

In this subsection, we describe an alternative scheme based on alternating minimization for learning a SO(3) invariant dictionary, which we apply to a dataset comprising handwritten digits.  Our motivation for developing an alternative scheme comes from computational considerations:  The techniques developed in this paper specify a method for learning SO(3) invariant dictionaries by solving a SDP (as a sub-routine) whose number of variables grow on the order $O(N^3)$, where $N$ is the data dimension (note: in general, one would need about $O(\sqrt{N})$ Fourier coefficients to attain a data dimension of size $N$).  Unfortunately, the degree $3$ dependency means that solving the SDP becomes exorbitant rather quickly, even for more modestly-sized experiments.  The goal of this subsection is to demonstrate a practical algorithm that is capable of learning SO(3) invariant dictionaries for data of moderately large size.

\textbf{Alternating Minimization.}  The costliest aspect of our proposed dictionary learning procedure is the sparse coding step as in \eqref{eq:sparsecode}.  To circumvent the computational difficulties, we propose solving the following instance instead
\begin{equation} \label{eq:so3_coding_onesparse}
\underset{c,\alpha,\beta,\gamma}{\arg \min} ~~ \sum_{\xi \in \hat{G}} \| \hat{y}_\xi - \sum_{j=1}^{q} \hat{\phi}_{j,\xi} (c_j D^{(\xi)}_j (\alpha,\beta,\gamma))^* \|_F^2.
\end{equation}
Here, $\hat{y}_\xi$ and $ \hat{\phi}_{j,\xi}$ are Fourier coefficients corresponding to the data and the dictionary respectively, while $D_j \in U(L^2(G))$ is an irreducible representation.  In essence, \eqref{eq:so3_coding_onesparse} restricts the solution $\ell_j$ in \eqref{eq:sparsecode} to be the linear combination of a \emph{single} irreducible.  In terms of regular dictionary learning, we seek a code that is exactly one-sparse.  The main benefit of the formulation in \eqref{eq:so3_coding_onesparse} is that the number of variables is reduced substantially: we only have four variables for every dictionary element.  The price we pay, however, is that the optimization instance \eqref{eq:so3_coding_onesparse} is non-convex -- in general, we do not know how to solve \eqref{eq:so3_coding_onesparse} analytically.

Our strategy for obtaining high quality solutions to \eqref{eq:so3_coding_onesparse} is a coordinate descent-based approach.  Specifically, we minimize over the variables $\alpha,\beta,\gamma$, and $c$ separately while keeping all other variables fixed.  We do so in a cyclic fashion and in that order.  To minimize over the angles $\alpha$, $\beta$, $\gamma$, we perform a grid search, while the minimization over $c$ admits a closed form solution given by $c = \mathrm{tr}(Y(D \phi)^\dagger) / \| D \phi \|_F^2 $.  We performed  tests on our heuristic for solving \eqref{eq:so3_coding_onesparse}, and we found that the squared error loss seems to stagnate after about $5$ iterations.  In our numerical implementations, we apply $5$ iterations across these variables.  

\textbf{Numerical Experiment.}  As a proof of concept, we apply our dictionary learning procedure on handwritten digits from the MNIST dataset \cite{mnist} -- further details are provided in the Supplementary Materials.  We impose SO(3) invariance and hence the learned images, shown in Figure \ref{fig:dictionaryelements}, are invariant up to translations and rotations.  We observe resemblances between the digits $0$, $1$ and $5$, with the learned dictionary elements, while the resemblances between other digits and the corresponding learned elements are weaker.  We believe an explanation for our observations is that the Euclidean loss is suboptimal for image processing tasks, and that our results can be improved using better suited losses such as those based on optimal transport.

\begin{figure}
\centering
\includegraphics[width=0.09\textwidth]{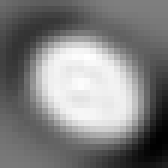}
\includegraphics[width=0.09\textwidth]{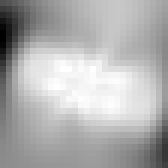}
\includegraphics[width=0.09\textwidth]{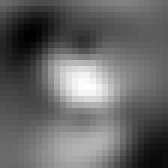}
\includegraphics[width=0.09\textwidth]{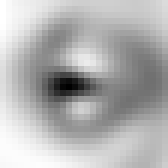}
\includegraphics[width=0.09\textwidth]{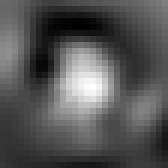}
\includegraphics[width=0.09\textwidth]{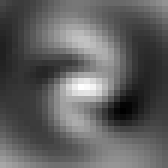}
\includegraphics[width=0.09\textwidth]{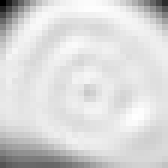}
\includegraphics[width=0.09\textwidth]{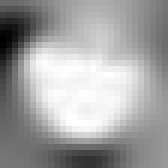}
\includegraphics[width=0.09\textwidth]{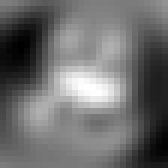}
\includegraphics[width=0.09\textwidth]{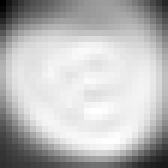}
\caption{Learned dictionary corresponding to digits $0$ to $9$ from left to right.}
\label{fig:dictionaryelements}
\end{figure}

\section{Concluding Remarks} \label{sec:conc}  In this paper, we study the problem of learning a dictionary invariant to a pre-specified group of symmetries through the lens of representation theory.  Using  the representation theory of compact groups, we describe a recipe for learning dictionaries invariant to compact group symmetries.  We briefly examine the harmonic analytical aspects of the dictionary learning problem, and we describe in detail the program for the groups SO(2), O(2), and SO(3).  

In what follows, we briefly remark on important future directions that stem from our work.

\textbf{Incorporating symmetries versus data augmentation.}  Symmetries and invariances in data arise very naturally in numerous application domains.  Many successful representation techniques take advantage of this basic observation in some form or another.  Fundamentally, this paper is guided by the philosophy that it is useful to incorporate the appropriate symmetries and invariances in the \emph{structure} of the data representation we learn.  Admittedly, the more typical response in similar settings is \emph{data augmentation} -- in simple terms, the data analyst appends transformed copies of the original dataset in the hope that the learned model has the appropriate invariant properties (for instance, one might hope a classifier predict the same outcome for any rotation of the same image).

The basic reason why data augmentation is widely used is that it is simple and cheap.  However, we have no control over whether the learned model is truly invariant -- it never truly `realizes' that it is intended to be invariant.  The computational methods we introduce here are powerful in that it learns representations that enforce invariance -- the price we pay is computation cost.  Are there application domains where the importance of incorporating the appropriate invariant structures in the learned data representation models sufficiently justify the additional computational cost required to train it?  This is a basic question in representation learning that warrants further study.

\textbf{Learning localized patches.}  A very closely variant of the problem formulation in \ref{eq:gidl_problem} is to view the dictionary elements as \emph{localized} patches or \emph{filters}.  In this formulation, we view the data as a \emph{single} vector $y \in \mathbb{R}^{\tilde{d}}$ residing in very high dimensions -- for instance, this could be a photograph or a single time series.  The goal is to learn a collection of filters $\{ \phi_j \}_{i=1}^{q} \subset \mathbb{R}^{\tilde{d}}$ that are only supported over $d \ll \tilde{d}$ consecutive coordinates -- this is why these patches are localized.

From a mathematical and a conceptual perspective, these problems are identical -- the techniques developed in this paper follow in a straightforward fashion to the more general setting.  From a computational perspective, the fact that these patches are embedded in a much higher dimensional space $\mathbb{R}^{\tilde{d}}$ introduces challenges that we need to address.  First, data dimension is effectively $\mathbb{R}^d$, and so $\rho(g)$ resides in a much higher dimensional space.  Second, our framework advocates working in the Fourier domain for computational reasons.  However, a spatially localized signal will not be localized in the frequency domain.  It would be interesting to bring in techniques from numerical optimization and computational harmonic analysis to address these challenges.

\textbf{SO(3) orbitopes.}  An outstanding unresolved question in this paper is if the SO(3) orbitope is tractable to express.  Closely related to this set is the tensor nuclear norm ball restricted to tensors with block Toeplitz structure.  While the tensor nuclear norm is known to be intractable to describe \cite{FL:18}, there are principled methods for deriving tractable convex relaxations -- one such scheme is based on the sum-of-squares hierarchy in which these relaxations are specified as the solution of a SDP \cite{Nie:17}.  It would be interesting to further investigate these connections.

\textbf{Conic Programming Descriptions of Orbitopes.}  In Section \ref{sec:intro}, we stated that one of our main contributions is to provide a recipe for describing $\rho(g)$ explicitly.  This still leaves an unresolved question -- namely -- how do we provide descriptions of the convex hull of $\{ \rho(g) : G \}$ that are amenable to optimization?  Answering this question in full generality is most certainly out of reach.  A more modest goal could be to provide a procedure for obtaining conic programming-based descriptions of structured classes of orbitopes, for instance, when the generators are known.

\section{Acknowledgments}

The authors wish to thank the two excellent anonymous reviewers whose comments have substantially improved the manuscript.

\appendix

\section{Proof of Proposition \ref{thm:bothdefinitionsequal}} \label{sec:proof_alternatenorm_charac}

In this section we prove Proposition \ref{thm:bothdefinitionsequal}.  To improve readability, we introduce the following:
\begin{equation} \label{eq:alternative_atomicnorm}
\| t \|^+_{\hat{H}} = \inf \{ \| z \|_{\hat{G}} : P_{B(L^2(\hat{H}))}(z) = t , z \in B(L^2(\hat{G})) \}.
\end{equation}

We restate Proposition \ref{thm:bothdefinitionsequal} equivalently as follows:
\begin{proposition} 
    Let $\hat{H} \subset \hat{G}$  be a subset of irreducibles.  Let $t \in B(L^2(\hat{H}))$.  Then
    $$
    \| t \|_{\hat{H}} = \| t \|_{\hat{H}}^+.
    $$
\end{proposition}

\begin{proof}  There are two parts to this proof.

[$\| t \|_{\hat{H}} \leq \| t \|_{\hat{H}}^+$]:  First, suppose $\| t \|_{\hat{H}}^+=1$.  In particular, this means that there exists $z \in B(L^2(\hat{G}))$ such that $P(z) = t$, and $\|z\|_{\hat{G}} = 1$.  (In principle, the infimum need not be attained.  If this happens, we refine these steps to a sequence $z_i$ such that $\|z_i\|_{\hat{G}} \rightarrow 1$.)   Here, $P$ denotes the projection onto $L^2(\hat{H})$.  By definition of being in the convex hull, this means that
\begin{equation} \label{eq:proof31_equation1}
z = \sum_{i} \theta_i ( \rho_{\xi} (g_i) )_{\xi \in \hat{G}}.    
\end{equation}
Here, $\theta_i$ are weights satisfying $\sum |\theta_i| = 1$ and they arise from $t$ belonging to the convex hull.  Note that we use $( \rho_{\xi} (g_i) )_{\xi \in \hat{G}}$ to denote the block diagonal operator in $B(L^2(\hat{G}))$ whose $\xi$-th block is $\rho_{\xi} (g_i)$.  Do also note that because the atomic norm is defined with respect to $\pm \rho(g)$, the weights $\theta_i$ are permitted to be signed.  Last, it is possible that $z$ belongs to the closure, but not the convex hull.  Should that happen, we refine this proof by replacing $z$ with a sequence of elements in the convex hull converging to $z$.

To \eqref{eq:proof31_equation1}, we apply the projection operator $P$ on both sides.  In particular, 
$$
P [ (\rho_{\xi} (g))_{\xi' \in \hat{G}} ]_{\xi} = 
\begin{cases}
\rho_{\xi} (g) \text{ if } \xi \in \hat{H} \\
0 \text{ otherwise }
\end{cases}.
$$
This can be summarized as
$$
P(z) = \sum_{i} \theta_i P[ ( \rho_{\xi} (g_i) )_{\xi \in \hat{G}}] = \sum_{i} \theta_i ( \rho_{\xi} (g_i) )_{\xi \in \hat{H}} = t.
$$
That is to say, $t \in \mathrm{conv} ( \{ \pm (\rho_{\xi})_{\xi \in \hat{H}} : g \in G \})$, and hence $\| t \|_{\hat{H}} \leq 1$.

[$\| t \|_{\hat{H}}^+ \leq \| t \|_{\hat{H}}$]:  Suppose $\| t \|_{\hat{H}} = 1$.  This means that $t \in \mathrm{conv} ( \{ \pm (\rho_{\xi})_{\xi \in \hat{H}} : g \in G \})$.  Let's write
$$
t = \sum_{i} \theta_i (\rho_{\xi}(g_i))_{\xi \in \hat{H}}
$$
where $\sum |\theta_i| = 1$.  Now we define
$$
z = \sum_{i} \theta_i (\rho_{\xi}(g_i))_{\xi \in \hat{G}} \in B(L^2(\hat{G})).
$$
In other words, $z$ is defined by extending all the terms $(\rho_{\xi}(g_i))_{\xi \in \hat{H}}$ to the full set of irreducibles $\hat{G}$.  Now, it is clear that $P(z) = t$.  Moreover, in the above we have $z \in \mathrm{conv} ( \{ \pm (\rho_{\xi})_{\xi \in \hat{G}} : g \in G \})$.  Therefore $\| t \|_{\hat{H}}^+ \leq \| z \|_{\hat{G}} \leq 1$.
\end{proof}

\section{Proofs of Theorems \ref{thm:objval_increasing} and \ref{thm:dictapprox}}

In this section we provide the proofs of Theorems \ref{thm:objval_increasing} and \ref{thm:dictapprox}.  In what follows, $\hat{H} \subseteq \hat{G}$ is a subset of irreducibles.

We begin with some observations.  First, given $f \in L^2(\hat{G})$, we denote its projection onto $L^2(\hat{H})$ by $P_{L^2(\hat{H})} (f)$.  Recall from \eqref{eq:noncomm_fourier} that the Fourier series expansion of $f$ is given by 
\begin{equation*}
f(x) ~=~ \sum_{\xi \in \hat{G}} \mathrm{dim}(V_\xi) ~ \left \langle \hat{f}(\xi), \rho_\xi(x)^* \right\rangle_\xi ~=~ \sum_{\xi \in \hat{G}} \mathrm{dim}(V_\xi) ~ \mathrm{tr}_{V_\xi}( \hat{f}(\xi) \rho_\xi(x) ) .
\end{equation*}
Because the Fourier functions form an orthogonal basis of $L^2(\hat{G})$, it follows that the Fourier series expansion of $P_{L^2(\hat{H})} (f)$ is simply the truncated expansion up to terms in $\hat{H}$
\begin{equation} \label{eq:objval_inc_proj1}
[P_{L^2(\hat{H})} (f)](x) ~=~ \sum_{\xi \in \hat{H}} \mathrm{dim}(V_\xi) ~ \left \langle \hat{f}(\xi), \rho_\xi(x)^* \right\rangle_\xi ~=~ \sum_{\xi \in \hat{H}} \mathrm{dim}(V_\xi) ~ \mathrm{tr}_{V_\xi}( \hat{f}(\xi) \rho_\xi(x) ) .
\end{equation}

Second, and in a similar fashion, given an operator $t \in B(L^2(\hat{G}))$, we denote its projection onto $B(L^2(\hat{H}))$ by $P_{B(L^2(\hat{H}))}(t)$.  Suppose $t$ is a block diagonal operator that acts on the Fourier coefficients by right multiplication
\begin{equation*} 
t : \hat{f}(\xi) \mapsto \hat{f}(\xi) t_\xi, \qquad \text{ where } \quad  t_\xi \in \mathbb{C}^{\mathrm{dim}(V_\xi) \times \mathrm{dim}(V_\xi)}, \quad \xi \in \hat{G}.
\end{equation*}
In particular (and as a reminder), a consequence of Peter-Weyl's is that the learned operators $\ell$ will obey such structure.  Then the projection of $t$ will also obey the same block diagonal structure, but restricted to the terms in $\hat{H}$
\begin{equation} \label{eq:objval_inc_proj2}
P_{B(L^2(\hat{H}))}(t) : \hat{f}(\xi) \mapsto \hat{f}(\xi) t_\xi,  \qquad \xi \in \hat{H}.
\end{equation}

In the next result as well as what follows, we will use the definition \eqref{eq:alternative_atomicnorm}.  As we shall see, the definition of $\|\cdot \|_{\hat{H}}$ via \eqref{eq:alternative_atomicnorm} is more convenient for analysis.

\begin{proposition} \label{thm:atomicnorm_increasing}
Let $t \in B(L^2(\hat{G}))$.  Then 
\begin{equation*}
\| P_{B(L^2(\hat{H}))}(t) \|^+_{\hat{H}} \leq \| t \|^+_{\hat{G}}.
\end{equation*}
\end{proposition}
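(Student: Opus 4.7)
The plan is to exhibit $t$ itself as a witness to the infimum defining the left-hand side, so that the proposition reduces to an almost immediate consequence of the definition in \eqref{eq:alternative_atomicnorm}.

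First I will handle the right-hand side. Applying \eqref{eq:alternative_atomicnorm} with the subset equal to the full $\hat{G}$, the feasibility constraint becomes $P_{B(L^2(\hat{G}))}(z) = t$. Since $P_{B(L^2(\hat{G}))}$ is the identity on its own codomain, this forces $z = t$ uniquely, and the infimum collapses to a single point, giving $\| t \|^+_{\hat{G}} = \| t \|_{\hat{G}}$. This step is purely definitional.

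Next I will turn to the left-hand side. By \eqref{eq:alternative_atomicnorm}, the quantity $\| P_{B(L^2(\hat{H}))}(t) \|^+_{\hat{H}}$ is the infimum of $\| z \|_{\hat{G}}$ over all operators $z$ (in the ambient space $B(L^2(\hat{G}))$ where the norm $\|\cdot\|_{\hat{G}}$ is defined) satisfying $P_{B(L^2(\hat{H}))}(z) = P_{B(L^2(\hat{H}))}(t)$. The key observation is that the choice $z := t$ is trivially feasible for this infimum, since the required equality then holds by construction. Taking this specific competitor therefore yields $\| P_{B(L^2(\hat{H}))}(t) \|^+_{\hat{H}} \le \| t \|_{\hat{G}}$, and combining with the identity from the previous paragraph gives the claim.

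The main obstacle is essentially nonexistent: this is a short preparatory inequality whose content reduces to the tautological feasibility of $z = t$. The only care needed is in parsing \eqref{eq:alternative_atomicnorm} so that $t$ itself qualifies as a competitor on the left-hand side; once that is recognised, no further work is required, and the proposition is ready to be deployed in the proofs of Theorems \ref{thm:objval_increasing} and \ref{thm:dictapprox} as a monotonicity statement for the norm $\|\cdot\|^+_{\hat{H}}$ under enlargement of $\hat{H}$.
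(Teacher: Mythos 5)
Your proof is correct and is essentially the paper's own argument: the paper likewise observes that $z=t$ is feasible for the infimum defining $\| P_{B(L^2(\hat{H}))}(t) \|^+_{\hat{H}}$, yielding the bound by $\| t \|_{\hat{G}}$, which coincides with $\| t \|^+_{\hat{G}}$ since the projection onto $B(L^2(\hat{G}))$ is the identity. Your version merely makes explicit the identification $\| t \|^+_{\hat{G}} = \| t \|_{\hat{G}}$ and the correct reading of the constraint as $z \in B(L^2(\hat{G}))$, both of which the paper leaves implicit.
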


\begin{proof}[Proof of Proposition \ref{thm:atomicnorm_increasing}]
By noting that $t$ is a feasible solution to the argument in the definition in \eqref{eq:alternative_atomicnorm}, we have
\begin{equation*}
 \| P_{B(L^2(\hat{H}))}(t) \|^+_{\hat{H}} = \inf \{ \| z \|_{\hat{G}} : P_{B(L^2(\hat{H}))}(z) = t , z \in B(L^2(\hat{G})) \} \leq  \| t \|_{\hat{G}} .
\end{equation*}
\end{proof}

\begin{proposition} \label{thm:dictapprox_obj}  Let $\hat{H} \subseteq \hat{G}$ be subset of irreducibles.  Let $\mathrm{OPT-}\hat{H}$ be the optimal value of \eqref{eq:GIDL_opt_FT_lagrangian}, and let and $\mathrm{OPT-}\hat{G}$ be the optimal value of \eqref{eq:GIDL_opt_FT_lagrangian_full} (as a reminder, this is the same as \eqref{eq:GIDL_opt_FT_lagrangian} but over the full set of irreducibles $\hat{G}$).  Then $\mathrm{OPT-}\hat{H} \leq \mathrm{OPT-}\hat{G}$.
\end{proposition}

\begin{proof}[Proof of Proposition \ref{thm:dictapprox_obj}]
Let $\hat{\phi}^{\hat{G}} \in L^2(\hat{G})$ and $\ell^{\hat{G}} \in B(L^2(\hat{G}))$ be a pair of feasible solutions to \eqref{eq:GIDL_opt_FT_lagrangian_full}.  Let the corresponding objective value be $\mathrm{VAL}$.  

Next, we substitute the pair of solutions $P_{L^2(\hat{H})}(\hat{\phi}^{\hat{G}})$ and $P_{B(L^2(\hat{H}))}(\ell^{\hat{G}})$ to \eqref{eq:GIDL_opt_FT_lagrangian_full}, and we compare its corresponding objective value with $\mathrm{VAL}$.  

First, by comparing the first term in \eqref{eq:GIDL_opt_FT_lagrangian} and in \eqref{eq:GIDL_opt_FT_lagrangian_full}, we have
\begin{equation*}
\begin{aligned}
& \sum_{\xi \in \hat{H}} \mathrm{dim}(V_\xi)^2 \| \hat{y}_{\xi} - \sum_{j=1}^{q} [P_{L^2(\hat{H})}(\hat{\phi}^{\hat{G}})]_{j,\xi} [P_{B(L^2(\hat{H}))}(\ell^{\hat{G}})]^*_{j,\xi} \|_F^2 \\
=~ & \sum_{\xi \in \hat{H}} \mathrm{dim}(V_\xi)^2 \| \hat{y}_{\xi} - \sum_{j=1}^{q} \hat{\phi}^{\hat{G}}_{j,\xi} \ell^{\hat{G}*}_{j,\xi} \|_F^2 \\ 
\leq ~ & \sum_{\xi \in \hat{G}} \mathrm{dim}(V_\xi)^2 \| \hat{y}_{\xi} - \sum_{j=1}^{q} \hat{\phi}^{\hat{G}}_{j,\xi} \ell^{\hat{G}*}_{j,\xi} \|_F^2.
\end{aligned}
\end{equation*}
The first equality is a consequence of \eqref{eq:objval_inc_proj1} and \eqref{eq:objval_inc_proj2}.  

Second, by Proposition \ref{thm:atomicnorm_increasing}, we have $\| P_{B(L^2(\hat{H}))}(\ell^{\hat{G}}) \|^+_{\hat{H}} \leq \| \ell^{\hat{G}} \|^+_{\hat{G}}$.  

Third, we have 
\begin{equation*}
\sum_{\xi \in \hat{H}} \mathrm{dim}(V_\xi)^2 \| P_{L^2(\hat{H})}(\hat{\phi}^{\hat{G}})_{\xi} \|_F^2 ~=~ \sum_{\xi \in \hat{H}} \mathrm{dim}(V_\xi)^2 \| \hat{\phi}^{\hat{G}}_{\xi} \|_F^2  ~\leq~ \sum_{\xi \in \hat{G}} \mathrm{dim}(V_\xi)^2 \| \hat{\phi}^{\hat{G}}_{\xi} \|_F^2.
\end{equation*}

By summing these three inequalities, the pair of solutions $P_{L^2(\hat{H})}(\hat{\phi}^{\hat{G}})$ and $P_{B(L^2(\hat{H}))}(\ell^{\hat{G}})$ attain an objective value at most $\mathrm{VAL}$ in \eqref{eq:GIDL_opt_FT_lagrangian}.  The result follows by taking the infimum over all feasible $\hat{\phi}^{\hat{G}}$ and $\ell^{\hat{G}}$.
\end{proof}

Let $\hat{\Phi}^{\hat{H}} := \{ \hat{\phi}^{\hat{H}}_{1},\ldots,\hat{\phi}^{\hat{H}}_{q} \} \subset L^2(\hat{H})$ be the optimal dictionary, and $\{ \ell^{(i) \hat{H}}_{j} \}_{1\leq i \leq n, 1 \leq j \leq q} \} \subset B(L^2(\hat{H}))$ be the optimal operators to \eqref{eq:GIDL_opt_FT_lagrangian}.  As discussed in Section \ref{sec:approx}, one can embed $\hat{\Phi}^{\hat{H}}$ into $L^2(\hat{G})$ in the natural way, and we let $\hat{\Phi}^{\hat{H}}$ denote such an embedding.  As discussed also in Section \ref{sec:approx}, we choose the embedding of $\{ \ell^{(i) \hat{H}}_{j} \}$ into $B(L^2(\hat{G}))$ to be such that $\|\ell^{\hat{H}} \|_{\hat{G}} = \| \ell^{\hat{H}} \|_{\hat{H}}$.


\begin{proof}[Proof of Theorem \ref{thm:objval_increasing} and \ref{thm:objval_decreasing}]
Let $\hat{\phi}^{\hat{H}}$ and $\hat{\ell}^{\hat{H}}$ denote the optimal dictionary elements and operators in \eqref{eq:GIDL_opt_FT_lagrangian}.  We consider what happens when we substitute these into the objective \eqref{eq:GIDL_opt_FT_lagrangian_full}.  


First, we have
\begin{equation*}
\begin{aligned}
& \sum_{\xi \in \hat{G}}  \frac{\mathrm{dim}(V_\xi)^2}{2} \| \hat{y}_{\xi} - \sum_{j=1}^{q} \hat{\phi}^{\hat{H}}_{j,\xi} (\ell^{\hat{H}}_{j,\xi})^* \|_F^2 \\
= ~~& \sum_{\xi \in \hat{H}}  \frac{\mathrm{dim}(V_\xi)^2}{2} \| \hat{y}_{\xi} - \sum_{j=1}^{q} \hat{\phi}^{\hat{H}}_{j,\xi} (\ell^{\hat{H}}_{j,\xi})^* \|_F^2 + \sum_{\xi \in \hat{G} \backslash \hat{H} }  \frac{\mathrm{dim}(V_\xi)^2}{2} \| \hat{y}_{\xi} \|_F^2.
\end{aligned}
\end{equation*}  

Second, we have $\| \ell_j^{\hat{H}} \|_{\hat{G}} = \| \ell_j^{\hat{H}} \|^+_{\hat{H}}$, as we noted earlier.  

Third, we have
\begin{equation*}
\sum_{\xi \in \hat{G}} \mathrm{dim}(V_\xi)^2 \| \hat{\phi}^{\hat{H}}_{\xi} \|_F^2 =
\sum_{\xi \in \hat{H}} \mathrm{dim}(V_\xi)^2 \| \hat{\phi}^{\hat{H}}_{\xi} \|_F^2
\end{equation*}
because $\hat{\phi}^{\hat{H}} \in L^2(\hat{H})$.

Let $\epsilon_{\hat{H}}^{(i)} = \sum_{\xi \in \hat{G} \backslash \hat{H} }  \frac{\mathrm{dim}(V_\xi)^2}{2} \| \hat{y}^{(i)}_{\xi} \|_F^2$.  By summing up the three components above, we have $\mathrm{OPT}-\hat{H} + \sum_i \epsilon_{\hat{H}}^{(i)} = \mathrm{OBJ}-\hat{H}$.  By optimality of $\hat{\Phi}^{\hat{G}}$, we have $\mathrm{OBJ}-\hat{H} \geq \mathrm{OPT}-\hat{G}$.  By Proposition \ref{thm:dictapprox_obj}, we have $\mathrm{OPT}-\hat{G} \geq \mathrm{OPT}-\hat{H}$.  Combining these, we have
\begin{equation*}
\mathrm{OPT}-\hat{H} + \sum_i \epsilon_{\hat{H}}^{(i)} = \mathrm{OBJ}-\hat{H} \geq \mathrm{OPT}-\hat{G} \geq \mathrm{OPT}-\hat{H}.
\end{equation*}
Since the data $y$ has finite $\ell_2$-norm, we have $\sum \epsilon_{\hat{H}_k}^{(i)} \rightarrow 0$ as $\hat{H}_k \uparrow \hat{G}$.  It follows that $\mathrm{OPT}-\hat{H}_k \uparrow \mathrm{OPT}-\hat{G}$ and $\mathrm{OBJ}-\hat{H}_k \downarrow \mathrm{OPT}-\hat{G}$.
\end{proof}

\begin{proof}[Proof of Proposition \ref{thm:dictapprox}]
By Theorem \ref{thm:objval_decreasing}, we have $\mathrm{OPT}-\hat{H}_k \downarrow \mathrm{OPT}-\hat{G}$.  In particular, it guarantees that $\mathrm{OPT}-\hat{H}_k \leq \mathrm{OPT}+\epsilon$ eventually.  Then by canonical uniqueness $D(\hat{\Phi},\hat{\Phi}^{\hat{H}_k}) \leq \delta(\epsilon)$.  Then take $\epsilon \rightarrow 0$ so that $\delta(\epsilon) \rightarrow 0$, from which the result follows.
\end{proof}

\section{Experimental Details of Section \ref{sec:so3cd}} \label{sec:so3cd_supp} In this section, we describe the details of our numerical experiment in Section \ref{sec:so3cd}.

\textbf{Data.}  Our raw dataset are handwritten digits from the MNIST dataset \cite{mnist}.  These are $28 \times 28$-pixel images of hand-written digits from $0$ to $9$ in grayscale with values ranging from $0$ (for black) to $1$ (for white).  We apply a sequence of pre-processing steps to convert these handwritten digits to functions over SO(3) to apply our procedure.  First, we translate these images so the cartesian coordinates are $[-\sqrt{2},\sqrt{2}] \times [-\sqrt{2},\sqrt{2}]$.  Second, we define a function $h : S^2 \rightarrow \mathbb{R}$ so that
\begin{equation*}
h( (x,y,z)^T ) := \text{Image Intensity at }(x/(1+z),y/(1+z)).
\end{equation*}
The intensity values of $h$ are initially specified only on a lattice grid -- these correspond to locations of the pixels of the original image.  For points off the lattice grid $(x,y)$ but within the domain $[-1/\sqrt{2},1/\sqrt{2}] \times [-1/\sqrt{2},1/\sqrt{2}]$, we define the value of $h$ via a bilinear interpolation using the four lattice points $(\lfloor x \rfloor, \lfloor y \rfloor), (\lceil x \rceil, \lfloor y \rfloor),  (\lfloor x \rfloor, \lceil y \rceil),  (\lceil x \rceil, \lceil y \rceil)$.  As for values of $x,y$ that fall outside the domain $[-1/\sqrt{2},1/\sqrt{2}] \times [-1/\sqrt{2},1/\sqrt{2}]$, we define the value of $h$ to be $0$.  Third, we define function $f :\text{SO}(3) \rightarrow \mathbb{R}$ by
\begin{equation*}
f ( \left( \begin{array}{c|c|c} u & v & w 
\end{array} \right) ) := h(u) \qquad \text{where} \qquad u,v,w \in S^2.
\end{equation*}
Here, we identify an element in SO(3) by a $3\times 3$-dimensional matrix whose columns are $u,v,w$.  Fourth, we compute the Fourier coefficients corresponding to $f$, which we denote by $\hat{y}$.  Recall $\hat{y}$ is a countable sequence of complex matrices of size $(2j+1) \times (2j+1)$, indexed by $j \in \{ 0,1,\ldots \}$.  In our implementation, we truncated the Fourier series to $j \in \{ 0,1,\ldots, 10\}$.  

\textbf{Numerical Experiment.}  We apply our dictionary learning procedure to learn a dictionary comprising one dictionary element $\hat{\phi}_{\rm opt}$ for the digits $\{0,1,\ldots,9\}$.  For each digit, our dataset comprises $200$ images.  We perform $10$ outerloop iterations in our dictionary learning procedure.

\textbf{Results.}  The computed dictionary elements are Fourier coefficients.  To visualize these as images, we need to reverse the pre-processing steps.  Let $\hat{\phi}_{\rm opt}$ be a dictionary element.  First, we perform the inverse Fourier transform to obtain $\phi_{\rm opt} \in L^2(\mathrm{SO}(3))$.  Second, define the spherical image $h_{\rm opt}$ corresponding to $\phi_{\rm opt}$ by
\begin{equation*}
h_{\rm opt} (u) = \int_{R : R = (u|*|*)} \phi_{\rm opt} (R) \mu(dR).
\end{equation*}
Here, $\mu$ is the uniform measure.  To approximate this integral, notice that -- assuming $u$ is at the north-pole -- the remaining columns lie on the equator.  We set the second argument -- say $v$ -- to be equally spaced points on the equator, and we set the third argument -- say $w$ -- to be $w = \pm (u \times v)$.  Finally, the pixel intensity of the learned dictionary at location $x,y$, $-1/\sqrt{2} \leq x,y \leq 1/\sqrt{2}$, is given by $\hat{h}( (x,y, \sqrt{1 - x^2 - y^2})^T )$.

In Figure \ref{fig:dictionaryelements}, we show the learned dictionary elements for the digits $0$ to $9$.  For digits $0$, $1$ and $5$, we note some resemblance between the learned dictionary element and the actual digit; for the other digits, the resemblance is less clear.  (We also note that the dictionary elements for the digits $6$ and $9$ bear similarities not observed in other digits.)  This can be explained by a couple of reasons: For instance digits $0$ and $1$ are structurally quite `simple' in that $0$ is a single circle while $1$ is a single stroke, and that makes it easier for the dictionary learning algorithm to identify such structures; the remaining digits are more complex.  Another reason is that the Euclidean loss, while amenable to computation, is not the ideal loss function for image processing -- alternatives such as the Wasserstein distances are better suited for images, though they lead to algorithms that are computationally far more expensive.  We note in passing that the objects in Figure \ref{fig:dictionaryelements} are dictionary elements, whose linear combinations would be relevant for digits in the MNIST problem.  It is not apparently evident if the dictionary elements themselves should possess any interpretable structure in the context of the actual digits. 

\textbf{Computational considerations.}  We make some brief remarks concerning the computational aspects of our experiment.  A consequence of the function $f$ being real-valued is that the $j$-th matrix coefficient $\hat{y} := \hat{y}_{\xi = j} \in \mathbb{C}^{(2j+1) \times (2j+1)}$ (here, the index runs between $-j$ and $j$) satisfies the relation
\begin{equation*}
X_{-m,-m'} = (-1)^j X_{-m,m'} = (-1)^{j + m + m'} X_{m,-m'}^* = (-1)^{m+m'} X_{m,m'}.
\end{equation*}
The degrees of freedom within the $j$-th matrix is $(2j+1)(j+1)$ for $j$ even and $2j^2+j$ for $j$ odd.  Hence, in truncating the data to Fourier coefficients in $j \in \{ 0,1,\ldots, 10\}$, the ambient (real) dimension in our problem is $891$, which is comparable to datasets in prior works.  Our earlier proposal -- that is, the relaxations described in Section \ref{sec:sdp_atomicnorm_multivariate} -- requires solving a SDP over matrices in $21^3 \times 21^3$ dimensions \emph{per} data-point.  This is near the limit of what general purpose SDP solvers are able to solve in a single instance, much less repeated over the dataset and multiple interations.

\bibliography{bib_repdl, Classfn}

\end{document}